\theoremstyle{theorem}
\newtheorem{theorem}{Theorem}
\newtheorem{lemma}[theorem]{Lemma}
\newtheorem{proposition}[theorem]{Proposition}
\newtheorem{corollary}[theorem]{Corollary}
\theoremstyle{definition}
\newtheorem{definition}[theorem]{Definition}
\theoremstyle{remark}
\newtheorem{remark}[theorem]{Remark}
\providecommand{\dd}{\mathrm{d}}
\DeclareMathOperator{\dom}{dom}
\DeclareMathOperator{\lspan}{span}
\begin{document}
\title[Efficient numerics for HJM models]{Efficient simulation and calibration of general HJM models by splitting schemes}
\author{Philipp D\"orsek \and Josef Teichmann}
\maketitle
\begin{abstract}
We introduce efficient numerical methods for generic HJM equations of interest rate theory by means of high-order weak approximation schemes.
These schemes allow for QMC implementations due to the relatively low dimensional integration space. The complexity of the resulting algorithm is considerably lower than the complexity of multi-level MC algorithms as long as the optimal order of QMC-convergence is guaranteed. In order to make the methods applicable to real world problems, we introduce and use the setting of weighted function spaces, such that unbounded payoffs and unbounded characteristics of the equations in question are still allowed. We also provide an implementation, where we efficiently calibrate an HJM equation to caplet data.
\end{abstract}
\section{Introduction}


The Heath-Jarrow-Morton equation (HJM-equation) of interest rate theory (\cite{HeathJarrowMorton1992}; see \cite{Filipovic2001,CarmonaTehranchi2006,Filipovic2009} for expositions) is a stochastic partial differential equation (SPDE) on the state space of forward rate curves, which is flexible enough to describe complicated dynamical features such as non-constant (local or stochastic) volatility, non-constant correlation, or jumps, or dependence structures. 
An analysis of geometric properties was performed in \cite{FilipovicTeichmann2004}.
As forward rate curves already encode all the market's information on default-free bond prices, it only remains to estimate volatilities either from the time series or from option prices or from both of them. 
For this purpose it is required that the numerical treatment of the HJM-equation can be performed efficiently: it is the purpose of this article to actually show that efficient numerical methods for the HJM-equation are at hand, how to construct and how to implement them.

In the case of generic SPDEs we usually neither have sufficient analytical information on the marginal's distribution, nor on its Fourier-Laplace transform, nor its short-time asymptotics. 
We are therefore forced to apply simulation techniques to approximate the random variables in question and we face two main sources of problems in such a procedure:

\subsection{Discretization error}
Numerical weak or strong approximation schemes with probabilistic flavor are built upon stochastic Taylor expansion and its iteration along $n$ steps due to the Markov property. 
Depending on the local error of the method this leads (at least for some class of test functions) to a global error $ \mathcal{O}(1/n^s) $, which is called error of order $ s $. 
The method is called high order method if $ s >1 $, and standard or low order method otherwise. 
There are schemes, e.g.~cubature methods \cite{Kusuoka2001,Kusuoka2004,LyonsVictoir2004} or splitting methods \cite{NinomiyaVictoir2008,NinomiyaNinomiya2009}, that substantially increase $ s $ and therefore reduce the global discretization error for a fixed number of discretization steps $n$. 
When applying the theory of weighted spaces we can also enlarge the sets of test functions and generic equations, which can be treated by the discretization method.

\subsection{Integration error}
Having discretized the SPDE problem we still have to evaluate the random variables involved in each local step, which usually leads to a numerical integration problem on some $\mathbb{R}^{dn}$, where $ d $ is the fixed number of dimensions which are needed for each local discretization step. 
Here we can apply three approaches: (deterministic) numerical integration, Monte-Carlo algorithms (MC) or Quasi-Monte-Carlo algorithms (QMC). 
Due to the $n$-dependence of integration space we do not try a direct numerical integration method, even though we have some hope that such an approach could possibly work. 
MC algorithms lead to integration errors $ \mathcal{O}(1/\sqrt{K}) $, where $K$ denotes the number of integration points, whereas QMC algorithms lead to integration errors approximately $ \mathcal{O}(1/K) $. 
In both cases the integration error dominates the total error asymptotically, which can be seen by complexity analysis. 
Let us be more precise on this: we assume that $ \mathcal{O}(n) $ operations are performed to calculate the value of the functional which we intend to integrate. Here we tacitly assume that dealing with elements in state space is $ \mathcal{O}(1) $, which is strictly speaking only guaranteed to be true in a finite dimensional setting. However dealing, e.g., with curves on the real line numerically can still be of $ \mathcal{O}(1) $ if only the relevant parts of the curve are actually calculated. Hence the total complexity $ C $ of the method is $ d_3 n K $, where $ d_3 $ is a constant. Given an accuracy $ \epsilon $ the following inequality has to hold true additionally,
\[
\frac{d_1}{n^s} + \frac{d_2}{\sqrt{K}} \leq \epsilon,
\]
whence we end up with the simple constraint minimization problem to minimize the complexity \[ C=d_3 n K \to \min \] given the previous inequality on accuracy. 
Its asymptotic solution is given by $ C=\mathcal{O}(\epsilon^{-2-1/s}) $ with $ K = \mathcal{O}(\epsilon^{-2})$ and $ n = \mathcal{O}(\epsilon^{-1/s})$. 
This can be improved by multi-level methods \cite{Heinrich2001,Giles2008,CreutzigDereichMuellerGronbachRitter2009} to a complexity estimate of order almost $\mathcal{O}(\epsilon^{-2})$, which is in turn the complexity of one dimensional MC integration. 
In other words, the complexity is equal to the integration of a functional where evaluating at a single point is of order $ \mathcal{O}(1) $. 
Multi-level methods improve by telescoping errors on different levels of discretization $ n $. 
However, in this case the asymptotic complexity is not improved by higher-order methods anymore, since it depends on weak and strong convergence orders so that one is restricted to low order Euler-like methods. 

\bigskip

If  we perform the same complexity analysis in case of higher order discretization schemes with a QMC algorithm instead of an MC algorithm we obtain an asymptotic complexity $ C=\mathcal{O}(\epsilon^{-1-1/s}) $ with $ K= \mathcal{O}(\epsilon^{-1}) $ and $ n = \mathcal{O}(\epsilon^{-1/s})$, which is indeed considerably better than multi-level MC in case of higher order methods ($s>1$). 
On the other hand it is not better than multi-level QMC \cite{GilesWaterhouse2009} theoretically could be. 
We emphasize that a multi-level QMC is theoretically far from being understood, and additionally we would need strong order $ 1 $ methods which are not always at hand. An additional problematic aspect is the need of high-dimensional integration spaces, where QMC is not known to perform well anymore. 

We claim that standard QMC with high-order weak approximation schemes is superior to multilevel MC due to the low dimensionality of the integration space $ \mathbb{R}^{dn} $ \emph{as long as the QMC order of convergence is understood to hold true}. 
For accuracy $ \epsilon $ the dimension of integration space is of order $ \mathcal{O}(\epsilon^{-1/s}) $, which in real world implementations is often sufficiently small such that the QMC order of convergence is ensured.

The goal of this work is therefore twofold: first we want to show how actually the theory of weighted spaces applies to the HJM equation. 
We even show that we have a simple weak approximation method of order $ 2 $ within this setting. 
Second, we claim that a QMC algorithm integrating the resulting functional is numerically efficient. 
We underline this statement by a calibration of a time-homogeneous, non-linear, diffusive HJM-equation to caplet data, i.e., we calibrate this equation to ten volatility smiles simultaneously.
Our method is not only fast enough for the calibration of the model, but also the computer programming itself is almost as easy as a standard Euler-Maruyama scheme due to the use of a splitting approach.

Let us compare our results to well-known and recent results on splitting schemes and weak approximation methods for SPDEs.
In contrast to classical results on the Lie-Trotter splitting such as \cite{BensoussanGlowinski1989,BensoussanGlowinskiRascanu1990,FlorchingerLeGland1991,Bensoussan1992,BensoussanGlowinskiRascanu1992,RascanuTudor1995,LeGland1992,SunGlowinski1993,ItoRozovskii2000,Gyongy2002,GyongyKrylov2003a,GyongyKrylov2003b}, we focus on a higher order method for nonlinear problems in the spirit of \cite{NinomiyaVictoir2008}, hence allowing us to conclude the practical efficiency of the method as explained above.
The topic of weak approximation for SPDEs was recently analysed in \cite{Debussche2011}.
While there, the focus was on space-time white noise driving the system, we consider only finite-dimensional noise, and can obtain the same rate of convergence as in the finite-dimensional setting with bounded and smooth vector fields.
Contrary to \cite{Ninomiya2010}, our model is inherently infinite-dimensional and does not allow a reduction to a low-dimensional stochastic differential equation.

\section{Weighted spaces and analysis of stochastic partial differential equations}
\label{sec:weightedspaces}
We provide an overview of the theory of weighted spaces that is at the core of the presented numerical method.
For more details, see also \cite{RoecknerSobol2006,DoersekTeichmann2010,Doersek2011,Doersek2011phdthesis}.
\subsection{The generalised Feller condition}
Given a fixed $\ell\ge1$, we consider the following setup.
\begin{enumerate}
	\item For $i=0,\cdots,\ell$, $(H_i,\langle\cdot,\cdot\rangle_{H_i})$ is a separable Hilbert space, and its norm is denoted by $\lVert x\rVert_{H_i}:=\langle x,x\rangle_{H_i}^{1/2}$.
	\item $H_{i+1}$ is compactly and densely embedded into $H_i$ for $i=0,\dots,\ell-1$.
	\item $A\colon\dom A\subset H_0\to H_0$ is the generator of a strongly continuous semigroup of contractions $(S_t)_{t\ge 0}$ on $H_0$.
	\item For $i=0,\dots,\ell-1$, $A\colon H_{i+1}\to H_i$ is bounded.
	\item For $i=1,\dots,\ell$, $(S_t)_{t\ge0}$ is a strongly continuous semigroup of contractions on $H_i$.
\end{enumerate}
In many cases, it will be adequate to choose $H_i:=\dom A^{i}$, e.g., if $A$ is a differential operator on a bounded domain.
If, however, $A$ is a differential operator on an unbounded domain, $\dom A$ will usually not be compactly embedded into $H_0$.
As we are interested in the HJM equation, where the underlying space variable varies in $[0,\infty)$, we consider the above, more general setup.
\begin{definition}
	Let $i\in\{0,\dots,\ell\}$.
	Given a left-continuous, increasing function $\rho\colon [0,\infty)\to(0,\infty)$ with $\lim_{u\to\infty}\rho(u)=+\infty$, set $\psi_i(x):=\rho(\lVert x\rVert_{H_i})$, we define the \emph{enveloping space} $\mathrm{B}^{\psi_i}_k(H_i):=\left\{ f\in\mathrm{C}^k(H_i)\colon \lVert f\rVert_{\psi_i,k}<\infty \right\}$, where $\mathrm{C}^k(X)$ denotes the space of $k$ times continuously Fr\'echet differentiable functions and
	\begin{equation}
		\lVert f\rVert_{\psi_i,k}
		:=
		\sum_{i=0}^{k}\lvert f\rvert_{\psi_i,j}
		\quad\text{with}\quad
		\lvert f\rvert_{\psi,j}
		:=
		\sup_{x\in H_i}\psi_i(x)^{-1}\lVert D^j f(x)\rVert_{L_j(H_i)}.
	\end{equation}
	Here, $L_j(H_i)$ is the linear space of bounded multilinear forms $a\colon (H_i)^j\to \mathbb{R}$ endowed with the norm 
	\begin{equation}
		\lVert a\rVert_{L_j(H_i)}
		:=
		\sup_{\substack{x_1,\dots,x_j\in H_i \\ \lVert x_i\rVert_{H_i}, i=1,\dots,j}}\lvert a(x_1,\dots,x_j)\rvert,
	\end{equation}
	which makes $(L_j(H_i),\lVert\cdot\rVert_{L_j(H_i)})$ a Banach space.

	Given an orthonormal basis $(e_j)_{j\in\mathbb{N}}$, define the space $\mathcal{A}(H_i)$ of bounded smooth cylindrical functions $H_i\to\mathbb{R}$ by
	\begin{equation}
		\mathcal{A}(H_i)
		:=
		\left\{ f\colon \text{$f=g(\langle\cdot,e_1\rangle_{H_i},\cdots,\langle\cdot,e_N\rangle_{H_i})$ for some $N\in\mathbb{N}$ and $g\in\mathrm{C}_b^{\infty}(\mathbb{R}^{N})$} \right\}.
	\end{equation}
	The closure of $\mathcal{A}(H_i)$ in $\mathrm{B}^{\psi_i}_k(H_i)$ is denoted by $\mathcal{B}^{\psi_i}_k(H_i)$, $k\ge 0$.
\end{definition}
\begin{remark}
	\label{rem:generalweightfunc}
	The above assumptions on the weight function $\psi_i$ are very restrictive.
	A weaker assumption on the weight function on which our analysis can be performed would be that the sets $\left\{ x\in H_i\colon \psi_i(x)\le R \right\}$ are weakly compact, and hence bounded, in $H_i$, and that $\psi_i$ is bounded on bounded sets.
	This is applied in Section~\ref{subsec:HJM-mma}.
\end{remark}
Applying \cite[Corollary~5.3, Remark~5.4]{RoecknerSobol2006}, we see that our space $\mathcal{B}^{\psi_i}_0(H_i)$ coincides with the space $WC_{\psi_i}$ defined by M.~R\"ockner and Z.~Sobol.
Hence, the following result is proved in \cite[Theorem~5.1]{RoecknerSobol2006}.
\begin{proposition}
	\label{prop:dualspaceBpsi}
	There exists an isometric isomorphy from $\mathcal{B}^{\psi}_0(H_i)^{*}$, the dual space to $\mathcal{B}^{\psi}_0(H_i)$, to the space 
	\begin{equation}
		\mathcal{M}^{\psi}(H_i)
		:=
		\left\{ \mu\colon\text{$\mu$ is a signed Borel measure on $H_i$ with $\int_{H_i}\psi_i(x)\lvert\mu(\dd x)\rvert<\infty$} \right\},
	\end{equation}
	where the latter space is endowed with the norm $\lVert\mu\rVert_{\psi_i,*}:=\int_{H_i}\psi_i(x)\lvert\mu(\dd x)\rvert$, $\lvert\mu\rvert$ denoting the total variation measure to $\mu$.
	The inverse of this isometry is given by $\ell_\mu(f):=\int_{H_i} f(x)\mu(\dd x)$ for all $f\in\mathcal{B}^{\psi_i}_0(H_i)$ and $\mu\in\mathcal{M}^{\psi}(H_i)$.
\end{proposition}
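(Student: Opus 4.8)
The statement is a weighted Riesz--Markov representation theorem, and the plan is to obtain it as a consequence of \cite[Theorem~5.1]{RoecknerSobol2006}: having identified, via \cite[Corollary~5.3, Remark~5.4]{RoecknerSobol2006}, the space $\mathcal{B}^{\psi}_0(H_i)$ with R\"ockner and Sobol's space $WC_{\psi}$, it remains only to check that the present data fit their framework. I would verify that $\psi(x)=\rho(\lVert x\rVert_{H_i})$ has sublevel sets $\{\psi\le R\}=\{\lVert x\rVert_{H_i}\le\rho^{-1}(R)\}$, i.e.\ balls of $H_i$; that such balls are compact and metrizable in the weak topology of the separable Hilbert space $H_i$; that the functions in $\mathcal{A}(H_i)$ are weakly continuous and, by Stone--Weierstrass, uniformly dense on these balls; and that the norm and weak Borel $\sigma$-algebras of $H_i$ coincide. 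For completeness I sketch below the self-contained argument, since it shows precisely which of these facts are used.

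First I would dispatch the easy half. For $\mu\in\mathcal{M}^{\psi}(H_i)$ and $f\in\mathcal{B}^{\psi}_0(H_i)$ one has
\[
\lvert\ell_\mu(f)\rvert\le\int_{H_i}\lvert f\rvert\,\dd\lvert\mu\rvert\le\Bigl(\sup_{x\in H_i}\psi(x)^{-1}\lvert f(x)\rvert\Bigr)\int_{H_i}\psi\,\dd\lvert\mu\rvert=\lVert f\rVert_{\psi,0}\,\lVert\mu\rVert_{\psi,*},
\]
so $\mu\mapsto\ell_\mu$ maps $\mathcal{M}^{\psi}(H_i)$ into $\mathcal{B}^{\psi}_0(H_i)^{*}$ with operator norm at most $\lVert\mu\rVert_{\psi,*}$; here $\psi\ge\rho(0)>0$, so $\lvert\mu\rvert$ is in fact a finite measure. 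Injectivity I would get from the finite-measure case of the monotone class theorem: if $\ell_\mu=0$ then $\int g\,\dd\mu=0$ for all $g\in\mathcal{A}(H_i)$, and since $\mathcal{A}(H_i)$ is a point-separating algebra generating $\mathcal{B}(H_i)$, this forces $\mu=0$.

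The substance is the isometry $\lVert\ell_\mu\rVert\ge\lVert\mu\rVert_{\psi,*}$. Taking a Hahn--Jordan decomposition $\mu=\mu^{+}-\mu^{-}$ carried by disjoint Borel sets $P,N$, the natural extremiser $f_0=\psi\,(\mathbf 1_P-\mathbf 1_N)$ satisfies $\lVert f_0\rVert_{\psi,0}\le1$ and $\ell_\mu(f_0)=\lVert\mu\rVert_{\psi,*}$, but it is neither continuous nor cylindrical, so it has to be regularised. For $\varepsilon>0$ I would: truncate to a ball $B$ with $\int_{H_i\setminus B}\psi\,\dd\lvert\mu\rvert<\varepsilon$ (dominated convergence, using that $\lvert\mu\rvert$ is finite and $\int\psi\,\dd\lvert\mu\rvert<\infty$); replace $\mathbf 1_P-\mathbf 1_N$ by a continuous $h$ with $\lvert h\rvert\le1$ and $\lvert\mu\rvert(\{h\ne\mathbf 1_P-\mathbf 1_N\})$ small, via Lusin's theorem for the finite Borel measure $\lvert\mu\rvert$ on the Polish space $H_i$; and approximate $h$ uniformly on $B$ by some $g\in\mathcal{A}(H_i)$ with $\lvert g\rvert\le1$, by Stone--Weierstrass on the weakly compact ball $B$. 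Multiplying by a smooth cutoff localised near $B$ and by a factor slightly below $1$, and summing the three errors, yields an admissible $f$ with $\ell_\mu(f)\ge\lVert\mu\rVert_{\psi,*}-C\varepsilon$; letting $\varepsilon\downarrow0$ gives the claim. (A small point here: since $\rho$ is only left-continuous, $\psi$ may jump, and this mild irregularity must be absorbed into the cutoff.)

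For surjectivity, given $\ell\in\mathcal{B}^{\psi}_0(H_i)^{*}$ I would localise: on the subspace of functions supported in $B_R:=\{\lVert x\rVert_{H_i}\le R\}$ one has $\rho(0)\le\psi\le\rho(R)$, so $\lVert\cdot\rVert_{\psi,0}$ is equivalent to the uniform norm, and---viewing $B_R$ as a compact metric space in the weak topology, on which $\mathcal{A}(H_i)$ is continuous and dense---$\ell$ induces a bounded functional on $\mathrm{C}(B_R)$, to which the classical Riesz representation theorem applies and produces a signed measure $\mu_R$. These are consistent as $R\uparrow\infty$, hence glue to a signed Borel measure $\mu$ on $H_i=\bigcup_R B_R$; testing $\ell$ against competitors supported in $B_R$ as in the isometry step bounds $\int_{B_R}\psi\,\dd\lvert\mu\rvert$ by $\lVert\ell\rVert$ uniformly in $R$, so $\mu\in\mathcal{M}^{\psi}(H_i)$, and a cutoff approximation $f\chi_R\to f$ valid in $\mathcal{B}^{\psi}_0=WC_{\psi}$ (where $\psi^{-1}\lvert f\rvert$ vanishes at infinity), together with dominated convergence, gives $\ell=\ell_\mu$. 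The main obstacle throughout is infinite-dimensionality: balls of $H_i$ are not norm-compact, so the classical Riesz theorem is not directly applicable and one is forced to the weak topology (and to re-proving weak continuity and density of $\mathcal{A}(H_i)$ there), which is exactly why the cleanest path is to import the identification $\mathcal{B}^{\psi}_0(H_i)=WC_{\psi}$ and the abstract representation from \cite{RoecknerSobol2006}.
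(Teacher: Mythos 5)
Your proposal matches the paper's proof: the paper establishes this result purely by identifying $\mathcal{B}^{\psi_i}_0(H_i)$ with R\"ockner and Sobol's space $WC_{\psi_i}$ (via their Corollary~5.3 and Remark~5.4) and then citing their Theorem~5.1, which is exactly the primary route you describe. The additional self-contained Riesz-type sketch you supply goes beyond what the paper records and is not needed for the comparison.
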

This result allows us to obtain a generalisation of the well-known Feller condition for the strong continuity of operator semigroups on $\mathrm{C}_0(D)$, $D$ a locally compact topological space, to the infinite-dimensional setting.
\begin{corollary}[generalised Feller condition]
	\label{cor:genfellercond}
	Fix $i\in{0,\dots,\ell}$.
	Let $(P_t)_{t\ge 0}$ be a family of continuous operators on $\mathcal{B}^{\psi_i}_0(H_i)$ satisfying the \emph{generalised Feller condition}, i.e.,
	\begin{enumerate}
		\item $P_0=I$, the identity on $\mathcal{B}^{\psi_i}_0(H_i)$,
		\item $P_{t+s}=P_t P_s$ for $s$, $t\ge 0$,
		\item $\lVert P_t\rVert_{L(\mathcal{B}^{\psi_i}_0(H_i))}\le C$ for all $t\in[0,\varepsilon)$ with some $C>0$ and $\varepsilon>0$, where $L(\mathcal{B}^{\psi_i}_0(H_i))$ is the space of bounded and linear operators on $\mathcal{B}^{\psi_i}_0(H_i)$ and endowed with the operator norm, and finally
		\item $\lim_{t\to 0+}P_t f(x)=f(x)$ for all $x\in X$ and $f\in\mathcal{B}^{\psi_i}_0(H_i)$.
	\end{enumerate}
	Then, $(P_t)_{t\ge 0}$ is a strongly continuous semigroup on $\mathcal{B}^{\psi_i}_0(H_i)$, i.e., for every $f\in\mathcal{B}^{\psi_i}_0(H_i)$, $\lim_{t\to 0+}\lVert P_t f-f\rVert_{\psi_i}=0$.
\end{corollary}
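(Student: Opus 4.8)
The plan is to prove the stated limit --- strong right-continuity at $0$ --- by upgrading the pointwise condition~(4) in two steps: first to weak continuity, using the identification of the dual space in Proposition~\ref{prop:dualspaceBpsi}, and then to strong continuity by the classical Ces\`aro-averaging argument for semigroups. Write $X:=\mathcal{B}^{\psi_i}_0(H_i)$.

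First I would show that $P_h f\to f$ weakly as $h\to0^{+}$, for every $f\in X$. By Proposition~\ref{prop:dualspaceBpsi} every functional on $X$ is of the form $\ell_\mu$ with $\mu\in\mathcal{M}^{\psi_i}(H_i)$, so this amounts to $\int_{H_i}P_h f\,\dd\mu\to\int_{H_i}f\,\dd\mu$. For $h<\varepsilon$, condition~(3) gives the pointwise bound $\lvert P_h f(x)\rvert\le\lVert P_h f\rVert_{\psi_i}\,\psi_i(x)\le C\lVert f\rVert_{\psi_i}\,\psi_i(x)$; the right-hand side is $\lvert\mu\rvert$-integrable by the very definition of $\mathcal{M}^{\psi_i}(H_i)$, and $P_h f(x)\to f(x)$ by~(4); so dominated convergence applies. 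Replacing $f$ by $P_t f$ and invoking~(2), this also yields weak right-continuity of $h\mapsto P_h f$ at every $t\ge0$.

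Next, for $f\in X$ and $t\in(0,\varepsilon)$ I would consider the averaged vector $v_{t,f}:=\int_0^t P_s f\,\dd s$. The integrand $s\mapsto P_s f$ is norm-bounded on $[0,t]$ by~(3), weakly measurable by the previous step, and essentially separably valued --- by weak right-continuity, each $P_s f$ lies in the norm closure of the (convex) linear span of $\{P_q f:q\in\mathbb{Q}\cap[0,t]\}$ --- hence Bochner integrable by the Pettis measurability theorem, so $v_{t,f}\in X$. As $P_h$ is bounded and linear it commutes with the integral, and with~(1)--(2) one gets $P_h v_{t,f}=\int_h^{t+h}P_s f\,\dd s$, so
\begin{equation}
	\lVert P_h v_{t,f}-v_{t,f}\rVert_{\psi_i}
	=\Bigl\lVert\int_t^{t+h}P_s f\,\dd s-\int_0^h P_s f\,\dd s\Bigr\rVert_{\psi_i}
	\le 2C\lVert f\rVert_{\psi_i}\,h\longrightarrow0
	\qquad(h\to0^{+}).
\end{equation}
Moreover $\ell_\mu\bigl(t^{-1}v_{t,f}\bigr)=t^{-1}\int_0^t\ell_\mu(P_s f)\,\dd s\to\ell_\mu(f)$ as $t\to0^{+}$, since $s\mapsto\ell_\mu(P_s f)$ is right-continuous at $0$ by the previous step; hence $t^{-1}v_{t,f}\to f$ weakly, and therefore $f$ lies in the weak --- so, by Mazur's theorem, also the norm --- closure of $\lspan\{v_{t,f'}:f'\in X,\ t\in(0,\varepsilon)\}$. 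Thus this span is norm-dense in $X$, and by the displayed estimate together with linearity, $h\mapsto P_h g$ is norm-continuous at $0$ for every $g$ in it.

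Finally, given $f\in X$ and $\delta>0$, choose $g$ in that dense span with $\lVert f-g\rVert_{\psi_i}<\delta$; then for $h<\varepsilon$ one has $\lVert P_h f-f\rVert_{\psi_i}\le(C+1)\delta+\lVert P_h g-g\rVert_{\psi_i}$, so $\limsup_{h\to0^{+}}\lVert P_h f-f\rVert_{\psi_i}\le(C+1)\delta$, and $\delta\downarrow0$ concludes. I expect the real obstacle to be the claim that $\int_0^t P_s f\,\dd s$ is an element of $X$ and not merely of $X^{**}$: weak right-continuity of the orbit does not by itself give Bochner integrability, and it is the separability of the closed linear span of the orbit --- extracted from weak right-continuity via the coincidence of weak and norm closures of convex sets --- that legitimises the vector-valued integral. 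Everything else (the dominated-convergence step, the algebra with the averaged vectors, and the closing density argument) is routine once Proposition~\ref{prop:dualspaceBpsi} and conditions~(1)--(4) are in hand. An alternative route, closer to the classical proof of the Feller property on $\mathrm{C}_0$, would replace $v_{t,f}$ by the resolvent-type vector $\int_0^{\infty}e^{-s}P_s f\,\dd s$, but it meets the same integrability point and additionally needs a Dini-type argument for $\lambda\int_0^\infty e^{-\lambda s}P_s f\,\dd s\to f$ in norm.
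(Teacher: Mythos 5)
Your proof is correct and follows essentially the same route as the paper: the first step (weak convergence $\ell_\mu(P_hf)\to\ell_\mu(f)$ via the dual-space identification of Proposition~\ref{prop:dualspaceBpsi} and dominated convergence) is exactly the paper's argument. The remainder of your write-up is a self-contained proof of the classical fact that a locally bounded, weakly continuous semigroup is strongly continuous, which the paper simply cites as \cite[Theorem~I.5.8]{EngelNagel2000}; your handling of the Bochner-integrability point via Pettis measurability and Mazur's theorem is the standard proof of that cited result.
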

\begin{proof}
	This is an easy consequence of Proposition~\ref{prop:dualspaceBpsi} and \cite[Theorem~I.5.8]{EngelNagel2000}:
	We only need to prove that $\lim_{t\to 0+}\ell(P_t f)=\ell(f)$ for all $f\in\mathcal{B}^{\psi_i}_0(H_i)$ and $\ell\in\mathcal{B}^{\psi_i}_0(H_i)^*$.
	But by Proposition~\ref{prop:dualspaceBpsi}, $\ell(f)=\int_{X}f(x)\mu(\dd x)$ for all $f\in\mathcal{B}^{\psi_i}_0(H_i)$ with some $\mu\in\mathcal{M}^{\psi_i}(H_i)$.
	As $\lim_{t\to 0+}P_t f(x)=f(x)$ for all $x\in H_i$, an application of Lebesgue's dominated convergence theorem yields the claim.
\end{proof}
Hence, in contrast to the weak continuity of Markov semigroups for infinite dimensional stochastic equations \cite{Cerrai1994}, the above result allows us to work with standard strongly continuous semigroups.

Usually, the difficult part in verifying the generalised Feller condition for a given Markov semigroup $(P_t)_{t\ge 0}$ is proving that $P_t(\mathcal{B}^{\psi_i}_0(H_i))\subset\mathcal{B}^{\psi_i}_0(H_i)$.
The following result can often be applied to this problem.
\begin{theorem}
	\label{thm:CbkBpsikdense}
	For $k\ge0$ and $i=1,\dots,\ell$, $\mathrm{C}_b^k(H_{i-1})\subset\mathcal{B}^{\psi}_k(H_i)$ is dense.
\end{theorem}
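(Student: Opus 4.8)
The plan is to establish the two halves of the statement separately: first that every $f\in\mathrm{C}_b^k(H_{i-1})$, restricted to $H_i$, lies in $\mathcal{B}^{\psi_i}_k(H_i)$, and then that the resulting family is dense there. Both parts rest on the interplay between the strong $H_i$-topology, in which the weighted norms $\lVert\cdot\rVert_{\psi_i,k}$ and the cylindrical algebra $\mathcal{A}(H_i)$ are defined, and the coarser $H_{i-1}$-topology, in which functions from $\mathrm{C}_b^k(H_{i-1})$ are globally continuous; the compactness of the embedding $H_i\hookrightarrow H_{i-1}$ from assumption~(2) is exactly what lets one move between the two.

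For density I would argue as follows. Write $\iota\colon H_i\hookrightarrow H_{i-1}$ for the embedding and $\iota^{*}\colon H_{i-1}\to H_i$ for its Hilbert-space adjoint; since $\iota$ is injective, $\iota^{*}$ has dense range in $H_i$, and $\langle x,y\rangle_{H_{i-1}}=\langle x,\iota^{*}y\rangle_{H_i}$ for $x\in H_i$, $y\in H_{i-1}$. Given a generator $f=g(\langle\cdot,e_1\rangle_{H_i},\dots,\langle\cdot,e_N\rangle_{H_i})\in\mathcal{A}(H_i)$ with $g\in\mathrm{C}_b^{\infty}(\mathbb{R}^N)$, pick $y_p^{(m)}\in H_{i-1}$ with $\iota^{*}y_p^{(m)}\to e_p$ in $H_i$ and set $f_m(x):=g(\langle x,y_1^{(m)}\rangle_{H_{i-1}},\dots,\langle x,y_N^{(m)}\rangle_{H_{i-1}})$. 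Each $f_m$ is bounded with bounded derivatives of all orders, so $f_m\in\mathrm{C}_b^{\infty}(H_{i-1})\subset\mathrm{C}_b^k(H_{i-1})$, while on $H_i$ it coincides with $g(\langle\cdot,\iota^{*}y_1^{(m)}\rangle_{H_i},\dots,\langle\cdot,\iota^{*}y_N^{(m)}\rangle_{H_i})$. Since every coordinate slot is linear in $x$, the chain rule gives $D^j f(x)=\sum_{p_1,\dots,p_j}(\partial_{p_1}\cdots\partial_{p_j}g)(L(x))\,\langle\cdot,e_{p_1}\rangle_{H_i}\otimes\cdots\otimes\langle\cdot,e_{p_j}\rangle_{H_i}$ and likewise for $f_m$, and a triangle-inequality estimate then bounds $\lVert D^j f(x)-D^j f_m(x)\rVert_{L_j(H_i)}$ by a constant — depending only on $N$ and on finitely many sup-norms of derivatives of $g$ — times $\min(\lVert x\rVert_{H_i}a_m,1)+a_m$, where $a_m:=\sum_p\lVert e_p-\iota^{*}y_p^{(m)}\rVert_{H_i}\to0$.

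The step I expect to be the main obstacle is precisely that this difference has to be made small \emph{uniformly over all of $H_i$} after multiplication by $\psi_i(x)^{-1}$: the Lipschitz-type factor $\lVert x\rVert_{H_i}a_m$ is useless for large $\lVert x\rVert_{H_i}$, since $\rho$ may grow arbitrarily slowly. I would resolve this by playing boundedness against the growth of $\rho$: for $\lVert x\rVert_{H_i}\le R$ use $\psi_i(x)^{-1}\le\rho(0)^{-1}$ together with the factor $Ra_m$, and for $\lVert x\rVert_{H_i}>R$ use $\psi_i(x)^{-1}\le\rho(R)^{-1}$ against the bounded term and let $\rho(R)\to\infty$. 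Choosing $R$ large and then $m$ large yields $f_m\to f$ in $\mathrm{B}^{\psi_i}_k(H_i)$, hence $\mathcal{A}(H_i)\subset\overline{\mathrm{C}_b^k(H_{i-1})}$, and density follows since $\mathcal{B}^{\psi_i}_k(H_i)$ is by definition the closure of $\mathcal{A}(H_i)$.

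For the inclusion, fix $f\in\mathrm{C}_b^k(H_{i-1})$; boundedness of $f$, of its derivatives and of $\iota$ immediately shows that $f|_{H_i}\in\mathrm{B}^{\psi_i}_k(H_i)$, so what remains is membership in $\mathcal{B}^{\psi_i}_k(H_i)=\overline{\mathcal{A}(H_i)}$. Let $P_N$ be the $H_i$-orthogonal projection onto $\lspan\{e_1,\dots,e_N\}$. Since $P_N\to I$ strongly on $H_i$ and $\iota$ is compact, $\iota P_N\to\iota$ in the operator norm of $L(H_i,H_{i-1})$; writing $\eta_N:=\lVert\iota(I-P_N)\rVert\to0$ one gets $\lVert x-P_Nx\rVert_{H_{i-1}}\le\eta_N\lVert x\rVert_{H_i}$, and for $\lVert x\rVert_{H_i}\le R$ both $x$ and $P_Nx$ stay in the fixed compact set $\overline{\iota(\{\lVert\cdot\rVert_{H_i}\le R\})}\subset H_{i-1}$. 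Then $f\circ P_N\to f$ in $\mathrm{B}^{\psi_i}_k(H_i)$: the chain rule splits the difference of $j$-th derivatives into $D^jf(x)-D^jf(P_Nx)$ — controlled by Lipschitz continuity of $D^jf$ for $j<k$ and by uniform continuity of $D^kf$ on the above compact set for $j=k$ — together with telescoping terms in which one argument slot carries $I-P_N$, each of order $\eta_N$; for $\lVert x\rVert_{H_i}>R$ one once more plays $\psi_i(x)^{-1}\le\rho(R)^{-1}$ against boundedness. Finally $f\circ P_N=g_N(\langle\cdot,e_1\rangle_{H_i},\dots,\langle\cdot,e_N\rangle_{H_i})$ with $g_N\in\mathrm{C}_b^k(\mathbb{R}^N)$; mollifying $g_N$ on $\mathbb{R}^N$ and invoking the same $\min$/growth-of-$\rho$ device to upgrade locally uniform convergence of the top-order derivative to $\psi_i$-weighted convergence, one approximates $f\circ P_N$ by elements of $\mathcal{A}(H_i)$. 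Hence $f\circ P_N\in\mathcal{B}^{\psi_i}_k(H_i)$, and therefore $f\in\mathcal{B}^{\psi_i}_k(H_i)$, which together with the density part completes the argument.
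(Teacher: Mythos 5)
Your argument is correct, and its analytic core is the same as the paper's: approximate through finite-rank projections, use compactness of the embedding $H_i\hookrightarrow H_{i-1}$ to get uniform control on a ball $\{\lVert x\rVert_{H_i}\le R\}$, play $\rho(R)^{-1}$ against global boundedness outside that ball, and finish with a finite-dimensional mollification. Where you genuinely diverge is in the density half and in the auxiliary machinery. The paper invokes its Proposition on simultaneously orthogonal bases to choose the basis defining $\mathcal{A}(H_i)$ so that $\langle\cdot,e_n/\lVert e_n\rVert_{H_i}\rangle_{H_i}=\lVert e_n\rVert_{H_i}\langle\cdot,e_n\rangle_{H_{i-1}}$; this makes $\mathcal{A}(H_i)\subset\mathrm{C}_b^k(H_{i-1})$ a literal inclusion, so density is immediate from the definition of $\mathcal{B}^{\psi_i}_k(H_i)$ as the closure of $\mathcal{A}(H_i)$. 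You instead keep an arbitrary defining basis and only show $\mathcal{A}(H_i)\subset\overline{\mathrm{C}_b^k(H_{i-1})}$ by approximating $H_i$-coordinates with $H_{i-1}$-coordinates through the adjoint $\iota^{*}$, at the cost of an extra $\min(\lVert x\rVert_{H_i}a_m,1)$-type estimate and the weight trick. Likewise, for the inclusion you replace the paper's $H_{i-1}$-orthogonal projections and its Corollary on $\sup_{\lVert y\rVert_{H_i}\le1}\lVert y-\pi_N y\rVert_{H_{i-1}}\to0$ by $H_i$-orthogonal projections and the standard fact that a compact operator composed with a uniformly bounded, strongly null sequence of self-adjoint operators converges to zero in norm. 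The net effect is a proof that is self-contained with respect to the appendix and manifestly independent of the choice of basis in $\mathcal{A}(H_i)$, but slightly longer; the paper's route buys a one-line density argument at the price of the spectral construction of the simultaneous basis. Your handling of the top-order derivative (uniform continuity only on the compact image $\overline{\iota(B_R)}$, plus the weight outside) is, if anything, more careful than the paper's at the mollification step.
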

\begin{proof}
	Apply Proposition~\ref{prop:simultaneousOB} to obtain an orthonormal basis $(e_n)_{n\in\mathbb{N}}$ of $H_{i-1}$ that is simultaneously orthogonal in $H_i$.
	Defining $\mathcal{A}(H_i)$ using $(e_n/\lVert e_n\rVert_{H_i})_{n\in\mathbb{N}}$, we see that every $f=g(\langle\cdot,e_1/\lVert e_1\rVert_{H_i}\rangle_{H_i},\dots,\langle\cdot,e_N/\lVert e_N\rVert_{H_i}\rangle_{H_i})\in\mathcal{A}(H_i)$ can be extended to a smooth cylindrical function on $H_{i-1}$, as
	\begin{alignat}{2}
		& g(\langle\cdot,e_1/\lVert e_1\rVert_{H_i}\rangle_{H_{i}},\dots,\langle\cdot,e_N/\lVert e_N\rVert_{H_i}\rangle_{H_i}) \\
		&=
		g(\lVert e_1\rVert_{H_i}\langle\cdot,e_1\rangle_{H_{i-1}},\dots,\lVert e_N\rVert_{H_i}\langle\cdot,e_N\rangle_{H_{i-1}}).\nonumber
	\end{alignat}
	Whence $\mathcal{A}(H_i)\subset\mathrm{C}_b^k(H_{i-1})$.

	Next, we show $\mathrm{C}_b^k(H_{i-1})\subset\mathcal{B}^{\psi}_k(H_i)$.
	Given $f\in\mathrm{C}_b^k(H_{i-1})$ and $\varepsilon>0$, we shall construct $f_{\varepsilon}\in\mathcal{A}(H_i)$ such that $\lVert f-f_{\varepsilon}\rVert_{\psi_i,k}<\varepsilon$.
	Let $\pi_N$ denote the $H_{i-1}$-orthogonal projection onto $\lspan\{e_j\colon j=1,\dots,N\}$.
	For $R>0$ arbitrary, we estimate
	\begin{alignat}{2}
		\lVert f-f\circ \pi_N\rVert_{\psi_i,k}
		&\le\sum_{j=1}^{k}\sup_{\substack{x\in H_i\\\lVert x\rVert_{H_i}\le R}}\psi_i(x)^{-1}\lVert D^j f(x) - D^j f(\pi_N x)\rVert_{L_j(H_i)}
		\notag\\
		&\phantom{\le}+ \rho(R)^{-1}\lVert f-f\circ\pi_N\rVert_{\mathrm{C}_b^k(H_i)}
	\end{alignat}
	As $\pi_N\colon H_i\to H_{i-1}$ is of operator norm one for all $N\in\mathbb{N}$, it is easy to see by the properties of $\rho$ that the final term goes to zero as $R$ goes to infinity, and hence can be made smaller than $\varepsilon/3$ by choosing $R_{\varepsilon}$, depending on $f$ but not on $N$, large enough.
	For the first term, note that $B:=\{x\in H_i\colon\lVert x\rVert_{H_i}\le R_{\varepsilon}\}$ is precompact in $H_{i-1}$.
	Hence, there exists $\delta>0$ such that $\lVert D^j f(x)-D^j f(y)\rVert_{L_j(H_i)}<\varepsilon/3$ for $x$, $y\in B$ with $\lVert x-y\rVert_{H_{i-1}}<\delta$.
	Choose $N_{\delta}$ according to Corollary~\ref{cor:simultaneousOBapproximation} to obtain that $\lVert x-\pi_N x\rVert_{H_{i-1}}<\delta$ whenever $N\ge N_{\delta}$ and $x\in B$.

	Finally, choose $f_{\varepsilon}\colon H_{i,N_{\delta}}\to\mathbb{R}$ in such a way that $\sum_{j=0}^{k}\sup_{\substack{x\in H_{i,N_{\delta}}\\\lVert x\rVert_{H_i}}}\lVert D^j f_{\varepsilon}(x)-D^j(f\circ\pi_{N_\delta})(x)\rVert_{L_j(H_i)}<\varepsilon/3$ and $\lVert f_{\varepsilon}\rVert_{\mathrm{C}_b^k(H_{i,N_{\delta}})}\le\lVert f\circ\pi_{N_\delta}\rVert_{\mathrm{C}_b^k(H_{i,N_{\delta}})}$.
	Here, $H_{i,N_{\delta}}:=\lspan\left\{ e_j\colon j=1,\dots,N_{\delta} \right\}$.
	Such a choice is always possible, as $H_{i,N_{\delta}}$ is finite dimensional and we can thus apply a standard mollifying argument.
	It follows similarly as above that $\lVert f_{\varepsilon}-f\circ\pi_{N_{\delta}}\rVert_{\psi_i,k}<\varepsilon/3$, and plugging the results together, we obtain
	\begin{equation}
		\lVert f-f_{\varepsilon}\rVert_{\psi_i,k} < \varepsilon.
	\end{equation}
	Thus, $\mathcal{A}(H_i)\subset\mathrm{C}_b^k(H_{i-1})\subset\mathcal{B}^{\psi_i}_k(H_i)$, and the claim follows.
\end{proof}
\begin{theorem}
	\label{thm:strongcont}
	Fix $i\in\left\{ 1,\dots,\ell \right\}$.
	Let $(x(t,x_0))_{t\ge 0}$ be a time homogeneous Markov property on the stochastic basis $(\Omega,\mathcal{F},(\mathcal{F}_t)_{t\ge 0},\mathbb{P})$ with values in $H_{i-1}$.
	Assume that
	\begin{enumerate}
		\item the mapping $H_{i-1}\to H_{i-1}$, $x_0\mapsto x(t,x_0)$ is almost surely continuous with respect to the norm topology on $H_i$ for every $t\ge 0$,
		\item if $x_0\in H_i$ and $t\ge 0$, then $x(t,x_0)\in H_i$ almost surely,
		\item for some $\varepsilon>0$ and $C>0$, $\mathbb{E}[\psi_i(x(t,x_0))]\le C\psi(x_0)$ for all $x_0\in H_i$ and $t\in[0,\varepsilon]$, and 
		\item $(x(t,x_0))_{t\ge0}$ has almost surely c\`adl\`ag paths in the weak topology of $H_i$.
	\end{enumerate}
	Then, $P_t\in L(\mathcal{B}^{\psi}_0(X))$ for all $t\ge 0$, where $P_t f(x_0):=\mathbb{E}[f(x(t,x_0))]$, $(P_t)_{t\ge 0}$ satisfies the generalised Feller condition, and hence, $(P_t)_{t\ge 0}$ is a strongly continuous semigroup on $\mathcal{B}^{\psi}_0(X)$.
\end{theorem}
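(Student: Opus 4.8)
The plan is to verify, for the operators $P_t f(x_0):=\mathbb{E}[f(x(t,x_0))]$, the four clauses of the generalised Feller condition in Corollary~\ref{cor:genfellercond}, together with the standing requirement there that each $P_t$ be a bounded operator on $\mathcal{B}^{\psi_i}_0(H_i)$. Clauses (1) and (2), i.e.\ $P_0=I$ and $P_{t+s}=P_tP_s$, are immediate from $x(0,x_0)=x_0$, the Markov property and time homogeneity. All the work lies in two places: first, showing that $P_t$ is well defined, maps $\mathcal{B}^{\psi_i}_0(H_i)$ into itself, and satisfies $\lVert P_t\rVert_{L(\mathcal{B}^{\psi_i}_0(H_i))}\le C$ for $t\in[0,\varepsilon]$ (this supplies both the operator property and clause~(3)); second, establishing the pointwise convergence $\lim_{t\to0+}P_tf(x_0)=f(x_0)$ of clause~(4).

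For the first point, fix $f\in\mathcal{B}^{\psi_i}_0(H_i)$ and $x_0\in H_i$. By assumption~(2), $x(t,x_0)\in H_i$ almost surely, so $\psi_i(x(t,x_0))$ is defined and $\lvert f(x(t,x_0))\rvert\le\lVert f\rVert_{\psi_i,0}\,\psi_i(x(t,x_0))$; measurability of $\omega\mapsto f(x(t,x_0))$ holds because, by assumption~(4), $t\mapsto x(t,x_0)$ is c\`adl\`ag, hence Borel measurable, into $H_i$ with the weak topology, while the identification $\mathcal{B}^{\psi_i}_0(H_i)=WC_{\psi_i}$ ensures that $f$ is weakly continuous, hence Borel, on the weakly compact sublevel sets $\{\psi_i\le R\}$. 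Taking expectations and using assumption~(3) gives $\lVert P_tf\rVert_{\psi_i,0}\le C\lVert f\rVert_{\psi_i,0}$ for $t\in[0,\varepsilon]$, the general-$t$ bound then following from the semigroup property. To see $P_tf\in\mathcal{B}^{\psi_i}_0(H_i)$, I would first take $f\in\mathrm{C}_b^0(H_{i-1})$: by assumption~(1) the map $x_0\mapsto x(t,x_0)$ is almost surely continuous on $H_{i-1}$, so $x_0^{(n)}\to x_0$ in $H_{i-1}$ forces $f(x(t,x_0^{(n)}))\to f(x(t,x_0))$ almost surely, and dominated convergence (dominating constant $\lVert f\rVert_{\mathrm{C}_b^0(H_{i-1})}$) shows $P_tf\in\mathrm{C}_b^0(H_{i-1})\subset\mathcal{B}^{\psi_i}_0(H_i)$ by Theorem~\ref{thm:CbkBpsikdense}. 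Since that theorem also gives density of $\mathrm{C}_b^0(H_{i-1})$ in $\mathcal{B}^{\psi_i}_0(H_i)$, and $P_t$ is bounded while $\mathcal{B}^{\psi_i}_0(H_i)$ is closed, approximating a general $f$ by such functions yields $P_tf\in\mathcal{B}^{\psi_i}_0(H_i)$.

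For clause~(4), fix $x_0\in H_i$. By assumption~(4), for almost every $\omega$ the path $t\mapsto x(t,x_0,\omega)$ is c\`adl\`ag into the weak topology of $H_i$; by the uniform boundedness principle it is then bounded in $H_i$ on $[0,\varepsilon]$, and since $H_i\hookrightarrow H_{i-1}$ is compact, on bounded subsets of $H_i$ the weak topology of $H_i$ agrees with the norm topology of $H_{i-1}$. Hence $x(t,x_0)\to x_0$ in $H_{i-1}$ almost surely as $t\to0+$, so for $f\in\mathrm{C}_b^0(H_{i-1})$ dominated convergence gives $P_tf(x_0)\to f(x_0)$. For general $f\in\mathcal{B}^{\psi_i}_0(H_i)$, given $\delta>0$ choose $g\in\mathrm{C}_b^0(H_{i-1})$ with $\lVert f-g\rVert_{\psi_i,0}<\delta/(2(C+1)\psi_i(x_0))$ and split $\lvert P_tf(x_0)-f(x_0)\rvert\le\lvert P_t(f-g)(x_0)\rvert+\lvert P_tg(x_0)-g(x_0)\rvert+\lvert g(x_0)-f(x_0)\rvert$; the sum of the two outer terms is $<\delta/2$ by the operator-norm bound of clause~(3), and the middle term is $<\delta/2$ for small $t$ by the previous step. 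Corollary~\ref{cor:genfellercond} then delivers strong continuity.

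I expect the main obstacle to be the first point, specifically showing that $P_tf$ lies in the \emph{closure} $\mathcal{B}^{\psi_i}_0(H_i)$ and not merely in the enveloping space $\mathrm{B}^{\psi_i}_0(H_i)$: this is exactly where the compact embedding $H_i\hookrightarrow H_{i-1}$ enters, through the density statement of Theorem~\ref{thm:CbkBpsikdense} and the detour via $\mathrm{C}_b^0(H_{i-1})$. The related subtlety is that mere norm continuity of $f$ on $H_i$ would be insufficient for both the integrability and the measurability of $f(x(t,x_0))$; one genuinely needs the weak-topology c\`adl\`ag regularity of assumption~(4) together with the weak continuity on sublevel sets built into the identification $\mathcal{B}^{\psi_i}_0(H_i)=WC_{\psi_i}$.
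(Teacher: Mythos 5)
Your proof is correct and follows essentially the same strategy as the paper: verify the generalised Feller condition of Corollary~\ref{cor:genfellercond}, obtain the operator bound from the moment estimate in assumption~(3), and get invariance of $\mathcal{B}^{\psi_i}_0(H_i)$ by passing through $\mathrm{C}_b(H_{i-1})$ and the density statement of Theorem~\ref{thm:CbkBpsikdense}. The only minor divergence is in the pointwise convergence $P_tf(x_0)\to f(x_0)$: the paper treats general $f\in\mathcal{B}^{\psi_i}_0(H_i)$ directly by truncating on the balls $B_R$ and using the weak continuity of $f$ on $B_R$, whereas you reduce to $f\in\mathrm{C}_b^0(H_{i-1})$ by density and deduce $x(t,x_0)\to x_0$ in $H_{i-1}$ from the compact embedding --- both variants rest on the same ingredients (the $\psi_i$-moment bound and the weakly c\`adl\`ag paths), so this is a cosmetic rearrangement rather than a different proof.
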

\begin{proof}
	First, we prove that $\lim_{t\to0+}P_t f(x_0)=f(x_0)$ for fixed $f\in\mathcal{B}^{\psi_i}_0(H_i)$ and $x_0\in H_i$.
	Let $R>\lVert x_0\rVert_{H_i}$.
	Set $B_R:=\left\{ x\in H_i\colon \lVert x\rVert_{H_i}\le R \right\}$, then
	\begin{alignat}{2}{}
		\lvert P_t f(x_0) - f(x_0)\rvert
		&\le
		\mathbb{E}[\lvert f(x(t,x_0)) - f(x_0)\rvert] \notag \\
		&\le
		\mathbb{E}[\lvert f(x(t,x_0)) - f(x_0)\rvert\chi_{B_R}(x(t,x_0))] \notag\\
		&\phantom{\le}+ \mathbb{E}[\lvert f(x(t,x_0))\rvert\chi_{H_i\setminus B_R}(x(t,x_0))] \notag\\
		&\phantom{\le}+ \lvert f(x_0)\rvert\mathbb{P}[\lVert x(t,x_0)\rVert_{H_i}> R]
	\end{alignat}
	with $\chi_A(x):=1$, $x\in A$, $0$ otherwise the indicator function of the set $A$.
	The Markov inequality yields
	\begin{equation}
		\mathbb{P}[\lVert x(t,x_0)\rVert_{H_i}>R]
		\le\rho(R)^{-1}\mathbb{E}[\psi_i(x(t,x_0))],
	\end{equation}
	and this term goes to zero as $R$ goes to infinity.
	Furthermore,
	\begin{equation}
		\mathbb{E}[\lvert f(x(t,x_0))\rvert\chi_{H_i\setminus B_R}(x(t,x_0))]
		\le
		\lVert f\rVert_{\psi_i,0}\mathbb{E}[\psi_i(x(t,x_0))\chi_{H_i\setminus B_R}(x(t,x_0))],
	\end{equation}
	and dominated convergence proves that this also goes to zero as $R$ goes to infinity.
	Finally, note that $f|_{B_R}$ is weakly continuous, as $f\in\mathcal{B}^{\psi_i}_0(H_i)$.
	Weak compactness of $B_R$ yields that $\lvert f(x)-f(x_0)\rvert\le2\sup_{x\in B_R}\lvert f(x)\rvert<\infty$ for $x\in B_R$, and monotone convergence proves $\lim_{t\to0+}\mathbb{E}[\lvert f(x(t,x_0))-f(x_0)\rvert\chi_{B_R}(x(t,x_0))]=0$.
	Hence, we have shown $\lim_{t\to0+}P_t f(x_0)=f(x_0)$.

	Next, note that $P_t(\mathrm{C}_b(H_{i-1}))\subset\mathrm{C}_b(H_{i-1})$, which is a consequence of the assumption of almost sure continuity of the mapping $x_0\mapsto x(t,x_0)$.
	As 
	\begin{alignat}{2}{}
		\lVert P_t f\rVert_{\psi_i,0}
		&=
		\sup_{x_0\in H_i}\psi_i(x_0)^{-1}\lvert \mathbb{E}[f(x(t,x_0))]\rvert \notag\\
		&\le
		\lVert f\rVert_{\psi_i,0}\sup_{x\in H_i}\psi_i(x_0)^{-1}\mathbb{E}[\psi_i(x(t,x_0))]
		\le C\lVert f\rVert_{\psi_i,0}
	\end{alignat}
	for $t\in[0,\varepsilon]$, this proves by Theorem~\ref{thm:CbkBpsikdense} that $P_t\in L(\mathcal{B}^{\psi_i}_0(H_i))$ for $t\in[0,\varepsilon]$.
	As the semigroup property is satisfied due to the Markov property, an induction shows $P_t\in L(\mathcal{B}^{\psi_i}_0(H_i))$ for all $t\ge 0$.
	Thus, Corollary~\ref{cor:genfellercond} proves the claim.
\end{proof}

\subsection{Application to stochastic partial differential equations}
\label{subsec:weightedspaces-application}
Let $(x(t,x_0))_{t\ge 0}$ be the solution of the stochastic partial differential equation
\begin{subequations}
\label{eq:spde}
\begin{alignat}{2}
	\dd x(t,x_0) &= ( Ax(t,x_0) + V_0(x(t,x_0)) )\dd t + \sum_{j=1}^{d}V_j(x(t,x_0))\circ\dd W^j_t, \\
	x(0,x_0)     &= x_0.
\end{alignat}
\end{subequations}
Here, $(W_t)_{t\ge 0}$ is a $d$-dimensional standard Brownian motion.
The vector fields $V_j$ are assumed to be of the form $V_j(x)=g_j(Lx)$, where $g_j\in\mathrm{C}_b^{\infty}(\mathbb{R}^N;H_{\ell})$ is a smooth function on $\mathbb{R}^N$ with values in $H_{\ell}$, and $L\colon H_0\to\mathbb{R}^N$ is a bounded linear mapping.
These are typical assumptions for HJM models to be applied in practice, see \cite{FilipovicTeichmann2004}.
Then, it follows that \eqref{eq:spde} admits unique solutions in every space $H_i$, $i=0,\dots,\ell$, given that the initial value $x_0$ is smooth enough.
\begin{lemma}
	Fix $\beta>0$ and $i\in\left\{ 0,\dots,\ell \right\}$.
	For some $\varepsilon>0$, there exists $C>0$ such that
	\begin{equation}
		\mathbb{E}[\cosh(\beta\lVert x(t,x_0)\rVert_{H_i})]\le C\cosh(\beta\lVert x_0\rVert_{H_i})
		\quad\text{for $x_0\in H_i$ and $t\in[0,\varepsilon]$}.
	\end{equation}
\end{lemma}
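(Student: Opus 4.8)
The plan is to apply Itô's formula to the real-valued process $Y_t:=\lVert x(t,x_0)\rVert_{H_i}^2$ composed with the function $\varphi(s):=\cosh(\beta\sqrt{s})$, and then to close a Gronwall estimate. Working with $\varphi$ rather than directly with $\cosh(\beta\lVert\cdot\rVert_{H_i})$ is convenient because $\varphi(s)=\sum_{n\ge0}\beta^{2n}s^n/(2n)!$ is entire with nonnegative Taylor coefficients; hence $x\mapsto\varphi(\lVert x\rVert_{H_i}^2)=\cosh(\beta\lVert x\rVert_{H_i})$ is smooth on all of $H_i$ (there is no singularity at the origin), and $\varphi'\ge0$, $\varphi''\ge0$, so the terms appearing in Itô's formula may be freely replaced by upper bounds.

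First I would collect the structural facts about the coefficients. In Itô form, \eqref{eq:spde} reads $\dd x=(Ax+b(x))\dd t+\sum_{j=1}^{d}V_j(x)\dd W^j_t$ with $b(x)=V_0(x)+\tfrac12\sum_{j=1}^{d}DV_j(x)V_j(x)$. Since $g_j\in\mathrm{C}_b^{\infty}(\mathbb{R}^N;H_{\ell})$ and $L\colon H_0\to\mathbb{R}^N$ is bounded, the maps $b$ and $V_j$ are bounded as maps into $H_{\ell}$, and $H_{\ell}\hookrightarrow H_i$ continuously; so there is $K>0$ with $\lVert b(x)\rVert_{H_i}\le K$ and $\sum_{j}\lVert V_j(x)\rVert_{H_i}^2\le K^2$ for all $x\in H_i$. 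The second key fact is dissipativity: because $(S_t)_{t\ge0}$ is a strongly continuous contraction semigroup on $H_i$, one has $\langle Ax,x\rangle_{H_i}\le0$ on the relevant domain; the same holds for every Yosida approximation $A_n:=nA(n-A)^{-1}$, which is bounded on $H_i$ and still generates a contraction semigroup there, so $\langle A_nx,x\rangle_{H_i}\le0$ for all $x\in H_i$.

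The computational heart is a pointwise bound on the drift. Writing $r=\lVert x\rVert_{H_i}$, Itô's formula gives $\dd\cosh(\beta r(t))=(\text{drift})\,\dd t+\dd(\text{local martingale})$ where, using $\langle Ax,x\rangle_{H_i}\le0$ together with $\varphi',\varphi''\ge0$, $\lvert\langle x,b(x)\rangle_{H_i}\rvert\le Kr$, $\sum_j\lVert V_j(x)\rVert_{H_i}^2\le K^2$ and $\sum_j\langle x,V_j(x)\rangle_{H_i}^2\le K^2 r^2$, the drift is bounded above by
\[
2\varphi'(r^2)\bigl(Kr+\tfrac12 K^2\bigr)+2K^2 r^2\varphi''(r^2).
\]
Using $2r\varphi'(r^2)=\beta\sinh(\beta r)$ and $4r^2\varphi''(r^2)=\beta^2\cosh(\beta r)-\beta\sinh(\beta r)/r$, the two contributions proportional to $\sinh(\beta r)/r$ cancel and what remains is exactly $\beta K\sinh(\beta r)+\tfrac12\beta^2 K^2\cosh(\beta r)\le c\cosh(\beta r)$ with $c:=\beta K+\tfrac12\beta^2 K^2$, a constant independent of $x_0$. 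Localising with the stopping times $\tau_R:=\inf\{t\colon r(t)>R\}$ to remove the local-martingale part, taking expectations, and letting $R\to\infty$ via Fatou's lemma, Gronwall's inequality then yields $\mathbb{E}[\cosh(\beta\lVert x(t,x_0)\rVert_{H_i})]\le e^{ct}\cosh(\beta\lVert x_0\rVert_{H_i})$, and one takes $C:=e^{c\varepsilon}$ for any chosen $\varepsilon>0$.

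The step I expect to be the main obstacle is making Itô's formula rigorous in the presence of the unbounded operator $A$. The clean route is to prove the displayed drift estimate first for the solutions $x_n$ of the equations regularised by the Yosida approximation $A_n$ --- where $A_n$ is bounded, so the Hilbert-space Itô calculus applies directly and, crucially, the constant $c$ above is the same for every $n$ because $\langle A_nx,x\rangle_{H_i}\le0$ --- and then to pass to the limit $n\to\infty$ using the convergence $x_n\to x$ in $H_i$ and Fatou's lemma. One must also invoke well-posedness of \eqref{eq:spde} in $H_i$ for $x_0\in H_i$, which is available here since the vector fields are smooth and bounded with values in $H_{\ell}$; the remaining manipulations are routine.
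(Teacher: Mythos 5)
Your argument is correct and rests on exactly the same structural ingredients as the paper's proof: dissipativity of $A$ in $H_i$ (from the contraction property of $(S_t)_{t\ge0}$), uniform boundedness of the coefficient vector fields in $H_i$ via $V_j(x)=g_j(Lx)$ with $g_j\in\mathrm{C}_b^{\infty}(\mathbb{R}^N;H_\ell)$, and a Gronwall closure giving $C=e^{c\varepsilon}$. The difference is in execution. The paper never applies It\^o's formula to the non-polynomial function $\cosh(\beta\lVert\cdot\rVert_{H_i})$: it applies it separately to each power $\lVert x(t,x_0)\rVert_{H_i}^{2m}$, tracks the $m$-dependence of the constants, and then sums the series $\cosh(u)=\sum_m u^{2m}/(2m)!$ by monotone convergence; since the resulting integral inequality involves a possibly infinite quantity, it needs a separate a priori finiteness argument (the method of the moving frame together with the uniform moment bounds of Da Prato--Zabczyk) before Gronwall can be invoked. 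You instead apply It\^o once to the composite $\varphi(\lVert x\rVert_{H_i}^2)$ with $\varphi(s)=\cosh(\beta\sqrt{s})$, exploiting that $\varphi$ is entire with nonnegative coefficients so that $\varphi',\varphi''\ge0$ and the apparent singularities at $r=0$ cancel (your identities $2r\varphi'(r^2)=\beta\sinh(\beta r)$ and $4r^2\varphi''(r^2)=\beta^2\cosh(\beta r)-\beta\sinh(\beta r)/r$ check out, and even without the exact cancellation $\sinh(\beta r)/r\le\beta\cosh(\beta r)$ would suffice). What your route buys: the localisation by $\tau_R$ makes every quantity in the Gronwall loop finite by construction, so you can dispense with the moving-frame finiteness argument entirely and conclude by Fatou. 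What it costs: you must justify the infinite-dimensional It\^o formula for a mild solution composed with an unbounded (though smooth) function, which you correctly propose to do via the Yosida approximation $A_n$ --- noting that $\langle A_nx,x\rangle_{H_i}\le0$ persists, so the constant $c=\beta K+\tfrac12\beta^2K^2$ is uniform in $n$ --- followed by a passage to the limit. Both proofs are complete modulo the same standard well-posedness and regularisation facts from Da Prato--Zabczyk.
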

\begin{proof}
	We apply It\^o's formula.
	For $m\ge2$,
	\begin{alignat}{2}{}
		\dd\lVert x(t,x_0)\rVert_{H_i}^{2m}
		&=
		m\lVert x(t,x_0)\rVert_{H_i}^{2(m-1)}\langle x(t,x_0),\dd x(t,x_0)\rangle_{H_i} 
		\notag \\ &\phantom{=}
		+ \frac{1}{2}m\Bigl( (m-1)\lVert x(t,x_0)\rVert_{H_i}^{2(m-2)}\langle x(t,x_0),\dd x(t,x_0)\rangle_{H_i}^{2} 
		\notag \\ &\phantom{=+\frac{1}{2}m\Bigl(}
		+ \lVert x(t,x_0)\rVert_{H_i}^{2(m-1)}\langle \dd x(t,x_0),\dd x(t,x_0)\rangle_{H_i} \Bigr)
		\notag \\
		&=
		m\lVert x(t,x_0)\rVert_{H_i}^{2(m-1)}\Bigl( \langle x(t,x_0),Ax(t,x_0)\rangle_{H_i}\dd t 
		\notag \\ &\phantom{=m\lVert x(t,x_0)\rVert_{H_i}^{2(m-1)}}
		+ \langle x(t,x_0),V_0(x(t,x_0))\rangle_{H_i}\dd t
		\notag \\ &\phantom{=m\lVert x(t,x_0)\rVert_{H_i}^{2(m-1)}}
		+ \sum_{j=1}^{d}\langle x(t,x_0),V_j(x(t,x_0))\rangle_{H_i}\dd W^j_t \Bigr)
		\notag \\
		&\phantom{=}+ \frac{1}{2}m\lVert x(t,x_0)\rVert_{H_i}^{2(m-2)}\sum_{j=1}^{d}\Bigl( (m-1)\langle x(t,x_0),V_j(x(t,x_0))\rangle_{H_i}^2 
		\notag \\ &\phantom{=}
		\qquad\qquad\qquad+ \lVert x(t,x_0)\rVert_{H_i}^2\langle V_j(x(t,x_0)),V_j(x(t,x_0))\rangle_{H_i} \Bigr)\dd t.
	\end{alignat}
	Taking expectations, the boundedness of the $V_j$ and the dissipativity of $A$ yield, as all moments are uniformly bounded by \cite[Theorem~7.3.5]{DaPratoZabczyk2002}, a constant $C>0$ independent of $m\ge2$ such that
	\begin{equation}
		\mathbb{E}[\lVert x(t,x_0)\rVert_{H_i}^{2m}]
		\le
		\lVert x_0\rVert_{H_i}^{2m} + Cm\int_{0}^{t}\mathbb{E}\left[ \lVert x(s,x_0)\rVert_{H_i}^{2m-1} + m\lVert x(s,x_0)\rVert_{H_i}^{2(m-1)} \right]\dd s.
	\end{equation}
	For $m=1$, we similarly obtain
	\begin{equation}
		\mathbb{E}[\lVert x(t,x_0)\rVert_{H_i}^2]
		\le
		\lVert x_0\rVert_{H_i}^2 + C\int_{0}^{t}\mathbb{E}[\lVert x(s,x_0)\rVert_{H_i} + 1]\dd s,
	\end{equation}
	and trivially, $\mathbb{E}[\lVert x(t,x_0)\rVert^0] = 1$.
	Note that $\cosh(u)=\sum_{m=0}^{\infty}\frac{u^{2m}}{(2m)!}$.
	Summing up, the monotone convergence theorem proves
	\begin{alignat}{2}{}
		\mathbb{E}[\cosh(\beta\lVert x(t,x_0)\rVert_{H_i})]
		&\le
		\cosh(\beta\lVert x_0\rVert_{H_i})
		\notag \\ &\phantom{\le}
		+ C\beta\int_{0}^{t}\mathbb{E}\Biggl[ \sum_{m=1}^{\infty}\frac{m}{(2m)!}\beta^{2m-1}\lVert x(s,x_0)\rVert_{H_i}^{2m-1}
		\notag \\ &\phantom{\le+C\beta}
		+ \beta\sum_{m=1}^{\infty}\frac{m^2}{(2m)!}\beta^{2(m-1)}\lVert x(s,x_0)\rVert_{H_i}^{2(m-1)} \Biggr]
		\notag \\
		&\le
		\cosh(\beta\lVert x_0\rVert_{H_i})
		\notag \\ &\phantom{\le}
		+C\frac{\beta}{2}\int_{0}^{t}\mathbb{E}\bigl[\sinh(\beta\lVert x(s,x_0)\rVert_{H_i})
		\notag \\ &\phantom{\le+C\beta}
		+\beta\cosh(\beta\lVert x(s,x_0)\rVert_{H_i})\bigr]\dd s.
	\end{alignat}
	Here, we have used that $\sinh(u)=\sum_{m=1}^{\infty}\frac{u^{2m-1}}{(2m-1)!}$, and that $\frac{m}{2m-1}\le 1$ for $m\ge 1$.
	As $\sinh(u)\le\cosh(u)$, we obtain that with a constant $C>0$ depending on $\beta$,
	\begin{alignat}{2}{}
		\mathbb{E}[\cosh(\beta\lVert x(t,x_0)\rVert_{H_i})]
		&\le
		\cosh(\beta\lVert x_0\rVert_{H_i}) 
		\notag \\ &\phantom{\le}
		+ C\int_{0}^{t}\mathbb{E}[\cosh(\beta\lVert x(s,x_0)\rVert_{H_i})]\dd s.
	\end{alignat}
	The method of the moving frame (see \cite{Teichmann2009}) allows us to conclude that \newline $\mathbb{E}[\cosh(\beta\lVert x(t,x_0)\rVert_{H_i})]<\infty$ for $t\ge 0$.
	Hence, an application of Gronwall's inequality proves the claim.
\end{proof}
Hence, the choice of weight function $\psi_{i,\beta}(x):=\cosh(\beta\lVert x\rVert_{H_i})$, $\beta>0$, is appropriate.
This is particularly important in the application of our results to the HJM equation, see Section~\ref{sec:HJM}.
\begin{corollary}
	Given $i\in\left\{ 1,\dots,\ell \right\}$ and $\beta>0$, the Markov semigroup $(P_t)_{t\ge 0}$ of $(x(t,x_0))_{t\ge 0}$ is strongly continuous on $\mathcal{B}^{\psi_{i,\beta}}_0(H_i)$.
\end{corollary}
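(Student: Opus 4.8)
The plan is to deduce the corollary directly from Theorem~\ref{thm:strongcont}, applied with the weight function $\psi_i:=\psi_{i,\beta}$, i.e.\ with $\rho(u):=\cosh(\beta u)$. This $\rho$ is continuous (hence left-continuous), increasing on $[0,\infty)$ since $\beta>0$, takes values in $(0,\infty)$, and tends to $+\infty$ as $u\to\infty$, so $\psi_{i,\beta}$ is an admissible weight in the sense of the Definition and the spaces $\mathcal{B}^{\psi_{i,\beta}}_0(H_i)$ are well defined. It therefore suffices to check that the HJM flow $(x(t,x_0))_{t\ge0}$ satisfies hypotheses (1)--(4) of Theorem~\ref{thm:strongcont} on $H_i$.

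Hypothesis (3), the sub-invariance estimate $\mathbb{E}[\psi_{i,\beta}(x(t,x_0))]\le C\psi_{i,\beta}(x_0)$ for $t\in[0,\varepsilon]$, is precisely the statement of the preceding Lemma, so nothing remains there. Hypotheses (2) and (4) --- that $x_0\in H_i$ implies $x(t,x_0)\in H_i$ almost surely, and that the paths are c\`adl\`ag (indeed continuous) in the weak topology of $H_i$ --- are part of the well-posedness of \eqref{eq:spde} in the scale $H_0\subset\cdots\subset H_\ell$ recorded above: since $V_j=g_j\circ L$ with $g_j\in\mathrm{C}_b^\infty(\mathbb{R}^N;H_\ell)$ and $L\in L(H_0;\mathbb{R}^N)$, the vector fields are bounded, smooth, and take values in $H_\ell\hookrightarrow H_i$, so the usual Banach fixed-point construction of the mild solution (as in \cite{DaPratoZabczyk2002}) carried out in $H_i$ yields a process with continuous $H_i$-valued paths, and norm-continuity of paths trivially gives c\`adl\`ag-ness in the weak topology.

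The only hypothesis requiring real work is (1), the almost sure continuity of the solution map $x_0\mapsto x(t,x_0)$ in the relevant norm topology on $H_i$. Here I would use the moving-frame transformation (see \cite{Teichmann2009}) already invoked in the proof of the Lemma: writing $x(t,x_0)=S_t y(t,x_0)$, the process $y$ solves a random evolution equation in $H_i$ with coefficients $S_{-t}V_j(S_t\,\cdot\,)$ that inherit global Lipschitz continuity and boundedness from $g_j\in\mathrm{C}_b^\infty$, after which one is reduced to a Hilbert-space stochastic differential equation with bounded Lipschitz vector fields. For such equations a Gronwall estimate gives Lipschitz dependence of $y(t,\cdot)$ on the initial datum in $L^2(\Omega;H_i)$ uniformly for $t$ in compact intervals; upgrading this to an almost sure statement by a Kolmogorov continuity / Borel--Cantelli argument and transporting back through the strongly continuous semigroup $(S_t)$ yields the desired almost sure $H_i$-continuity of $x_0\mapsto x(t,x_0)$.

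With (1)--(4) verified, Theorem~\ref{thm:strongcont} gives $P_t\in L(\mathcal{B}^{\psi_{i,\beta}}_0(H_i))$ for all $t\ge0$ together with the generalised Feller property, hence the strong continuity of $(P_t)_{t\ge0}$ on $\mathcal{B}^{\psi_{i,\beta}}_0(H_i)$. The main obstacle is thus the verification of hypothesis (1) --- continuous dependence on the initial value in the strong topology of $H_i$ --- whereas (3) is already in hand and (2), (4) are immediate consequences of the cited SPDE well-posedness theory.
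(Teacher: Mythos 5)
Your proposal is correct and follows exactly the route the paper takes: the paper's proof is the one-line observation that the hypotheses of Theorem~\ref{thm:strongcont} can be verified using \cite[Theorem~7.3.5]{DaPratoZabczyk2002}, with the moment bound (3) supplied by the preceding Lemma. Your write-up simply fills in the details of that verification (well-posedness in the scale $H_i$, continuous dependence on the initial value via the moving-frame reduction), which is consistent with what the authors leave implicit.
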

\begin{proof}
	Under the given assumptions, we can prove the conditions of Theorem~\ref{thm:strongcont} using \cite[Theorem~7.3.5]{DaPratoZabczyk2002}.
\end{proof}
Choose some $\ell_0\in\left\{ 1,\dots,\ell \right\}$ and $\beta_0>0$.
We perform an analysis of the infinitesimal generator $\mathcal{G}$ with domain $\dom{G}$ of $(P_t)_{t\ge 0}$, considered as strongly continuous semigroup on $\mathcal{B}^{\psi_{\ell_0,\beta_0}}(H_{\ell_0})$.
In the following, $Vf(x):=Df(x)(V(x))$ denotes the directional derivative for sufficiently smooth functions $f\colon H_i\to\mathbb{R}$ and vector fields $V\colon H_i\to H_i$.
\begin{lemma}
	\label{lem:estimatesmoothlieder}
	Fix $i\in\left\{ 0,\dots,\ell \right\}$.
	For $j=0,\dots,d$ and $f\in\mathcal{A}(H_i)$, $V_j f\in\mathcal{A}(H_i)$.
	Furthermore, the directional derivative $f\mapsto V_j f$ defines a bounded linear operator from $\mathcal{B}^{\psi_{i,\beta}}_k(H_i)$ to $\mathcal{B}^{\psi_{i,\beta}}_{k-1}(H_i)$, $k\ge 1$.
\end{lemma}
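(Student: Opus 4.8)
The plan is to exploit the special structure $V_j(x)=g_j(Lx)$ with $g_j\in\mathrm{C}_b^\infty(\mathbb{R}^N;H_\ell)$ and $L\colon H_0\to\mathbb{R}^N$ bounded linear, together with the fact that the functions in $\mathcal{A}(H_i)$ are smooth cylindrical functions built from finitely many linear functionals. First I would verify the first assertion directly: if $f=h(\langle\cdot,e_1\rangle_{H_i},\dots,\langle\cdot,e_M\rangle_{H_i})$ with $h\in\mathrm{C}_b^\infty(\mathbb{R}^M)$, then $V_jf(x)=Df(x)(V_j(x))=\sum_{m=1}^M \partial_m h(\langle x,e_1\rangle_{H_i},\dots)\,\langle g_j(Lx),e_m\rangle_{H_i}$. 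Since $L$ is bounded from $H_0$ to $\mathbb{R}^N$ (hence, restricting, from $H_i$ to $\mathbb{R}^N$, using the continuous embeddings), each coordinate $x\mapsto (Lx)_k$ is itself a bounded linear functional on $H_i$, so $x\mapsto g_j(Lx)$ is a smooth cylindrical $H_\ell$-valued map and $x\mapsto\langle g_j(Lx),e_m\rangle_{H_i}$ is a smooth bounded cylindrical scalar function (boundedness of all derivatives coming from $g_j\in\mathrm{C}_b^\infty$ and boundedness of $L$). Products and compositions of such functions stay cylindrical and smooth with bounded derivatives, so $V_jf\in\mathcal{A}(H_i)$.

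Next I would establish the quantitative bound on $\lVert V_jf\rVert_{\psi_{i,\beta},k-1}$ in terms of $\lVert f\rVert_{\psi_{i,\beta},k}$ for $f\in\mathcal{A}(H_i)$, which is the genuine content of the lemma. Writing $V_jf(x)=Df(x)(g_j(Lx))$ and differentiating $j'$ times, $0\le j'\le k-1$, by the Leibniz/chain rule one gets a finite sum of terms of the form $D^{p}f(x)$ contracted with $p$ arguments, each argument being either a fixed test vector $y_r$ or one of the derivatives $D^{q}(g_j\circ L)(x)$ evaluated on test vectors, with $p\le j'+1\le k$. The key structural estimates are: $\lVert D^{q}(g_j\circ L)(x)\rVert\le \lVert g_j\rVert_{\mathrm{C}_b^q}\lVert L\rVert^q_{L(H_i,\mathbb{R}^N)}$ uniformly in $x$ (this is where $g_j\in\mathrm{C}_b^\infty$ and $L$ bounded enter), so every such factor contributes a constant independent of $x$; and $\lVert D^pf(x)\rVert_{L_p(H_i)}\le\lvert f\rvert_{\psi_{i,\beta},p}\,\psi_{i,\beta}(x)$ by definition of the seminorms. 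Collecting, $\psi_{i,\beta}(x)^{-1}\lVert D^{j'}(V_jf)(x)\rVert_{L_{j'}(H_i)}\le C\sum_{p\le j'+1}\lvert f\rvert_{\psi_{i,\beta},p}$ with $C$ depending only on $d$, $k$, the $\mathrm{C}_b^k$-norms of the $g_j$, and $\lVert L\rVert$; taking the supremum over $x$ and summing over $j'\le k-1$ yields $\lVert V_jf\rVert_{\psi_{i,\beta},k-1}\le C\lVert f\rVert_{\psi_{i,\beta},k}$.

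Finally, since $\mathcal{A}(H_i)$ is by definition dense in $\mathcal{B}^{\psi_{i,\beta}}_k(H_i)$, the operator $f\mapsto V_jf$, being bounded on the dense subspace $\mathcal{A}(H_i)$ with respect to the $\lVert\cdot\rVert_{\psi_{i,\beta},k}\to\lVert\cdot\rVert_{\psi_{i,\beta},k-1}$ norms, extends uniquely to a bounded linear operator $\mathcal{B}^{\psi_{i,\beta}}_k(H_i)\to\mathcal{B}^{\psi_{i,\beta}}_{k-1}(H_i)$; because $V_jf_n\to V_jf$ in $\mathrm{B}^{\psi_{i,\beta}}_{k-1}$ for an approximating sequence $f_n\in\mathcal{A}(H_i)$, the extension indeed lands in the closure $\mathcal{B}^{\psi_{i,\beta}}_{k-1}(H_i)$ and agrees with the pointwise directional derivative on all of $\mathcal{B}^{\psi_{i,\beta}}_k(H_i)$ (one checks that $k$-fold differentiability is preserved under the $\psi$-norm limit). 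The main obstacle is purely bookkeeping: organizing the combinatorics of the higher-order chain rule for $D^{j'}(Df(\cdot)(g_j(L\cdot)))$ so that every term is manifestly controlled by $\psi_{i,\beta}(x)$ times a seminorm of $f$ of order $\le k$, and being careful that the constant $C$ is uniform in $x$ — which it is precisely because all the "dangerous" factors $D^q(g_j\circ L)$ are globally bounded, the weight growth in $x$ being entirely absorbed by the single factor $D^pf(x)$.
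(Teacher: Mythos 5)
Your proposal is correct and follows essentially the same route as the paper's (very terse) proof: use the cylindrical structure of $V_j(x)=g_j(Lx)$ to show $\mathcal{A}(H_i)$ is preserved, obtain the norm estimate by a direct chain-rule computation in which the uniformly bounded derivatives of $g_j\circ L$ absorb all $x$-dependence except the single factor $D^pf(x)\le\lvert f\rvert_{\psi_{i,\beta},p}\,\psi_{i,\beta}(x)$, and conclude by density of $\mathcal{A}(H_i)$ in $\mathcal{B}^{\psi_{i,\beta}}_k(H_i)$. You have simply filled in the bookkeeping that the paper leaves implicit.
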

\begin{proof}
	The special form of $V_j$ proves $V_j f\in\mathcal{A}(H_i)$ for $f\in\mathcal{A}(H_i)$.
	The estimate $\lVert V_j f\rVert_{\psi_{i,\beta}}\le C\lVert f\rVert_{\psi_{i,\beta}}$ can be shown by a direct calculation using the boundedness of $V_j$ and its derivatives, and the result follows from the density of $\mathcal{A}(H_i)$ in $\mathcal{B}^{\psi_{i,\beta}}_k(H_i)$.
\end{proof}
\begin{lemma}
	\label{lem:estimategeneratorlieder}
	Fix $i\in\left\{ 1,\dots,\ell \right\}$ and $\beta_1<\beta_2$.
	The operator $f\mapsto Df(\cdot)(A\cdot)$ maps $\mathcal{A}(H_{i-1})$ to $\mathcal{A}(H_i)$, and defines a bounded linear operator from $\mathcal{B}^{\psi_{i-1,\beta_1}}_k(H_{i-1})$ to $\mathcal{B}^{\psi_{i,\beta_2}}_{k-1}(H_i)$, $k\ge 1$.
\end{lemma}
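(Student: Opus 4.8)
The plan is to analyse the operator $\tilde Af(x):=Df(x)(Ax)$ directly. It is well defined for $x\in H_i$, since $A$ maps $H_i$ boundedly into $H_{i-1}$ and $Df(x)$ extends the $H_{i-1}$-derivative of $f$; moreover $x\mapsto Df(x)$ is $\mathrm{C}^{k-1}$ on $H_{i-1}$ and $x\mapsto Ax$ is bounded linear from $H_i$ to $H_{i-1}$, so $\tilde Af\in\mathrm{C}^{k-1}(H_i)$. The first step is to record, by the product and chain rules, that for $0\le j\le k-1$ and $h_1,\dots,h_j\in H_i$,
\begin{equation}
	D^j(\tilde Af)(x)(h_1,\dots,h_j)=D^{j+1}f(x)(Ax,h_1,\dots,h_j)+\sum_{l=1}^{j}D^jf(x)(h_1,\dots,Ah_l,\dots,h_j),
\end{equation}
all terms making sense because $j+1\le k$.

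The quantitative heart is the pointwise bound. Taking $h_1,\dots,h_j$ in the unit ball of $H_i$, using the continuous embedding $H_i\hookrightarrow H_{i-1}$ to pass from $H_i$-norms to $H_{i-1}$-norms, the operator norm of $A\colon H_i\to H_{i-1}$ to control $\lVert Ax\rVert_{H_{i-1}}$ and $\lVert Ah_l\rVert_{H_{i-1}}$, and the defining estimates $\lVert D^mf(x)\rVert_{L_m(H_{i-1})}\le\lvert f\rvert_{\psi_{i-1,\beta_1},m}\cosh(\beta_1\lVert x\rVert_{H_{i-1}})$ for $m=j,j+1$, one obtains a constant $C$ with
\begin{equation}
	\lVert D^j(\tilde Af)(x)\rVert_{L_j(H_i)}\le C\,(1+\lVert x\rVert_{H_i})\cosh\bigl(\beta_1\lVert x\rVert_{H_i}\bigr)\,\lVert f\rVert_{\psi_{i-1,\beta_1},k}.
\end{equation}
Here the hypothesis $\beta_1<\beta_2$ is decisive: the function $u\mapsto(1+u)\cosh(\beta_1u)/\cosh(\beta_2u)$ is continuous on $[0,\infty)$ and tends to $0$ at infinity, hence is bounded, so the right-hand side is $\le C'\psi_{i,\beta_2}(x)\lVert f\rVert_{\psi_{i-1,\beta_1},k}$. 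Thus $\lvert\tilde Af\rvert_{\psi_{i,\beta_2},j}\le C'\lVert f\rVert_{\psi_{i-1,\beta_1},k}$ for every $j\le k-1$, and summing over $j$ gives $\lVert\tilde Af\rVert_{\psi_{i,\beta_2},k-1}\le C\lVert f\rVert_{\psi_{i-1,\beta_1},k}$; in particular $\tilde Af\in\mathrm{B}^{\psi_{i,\beta_2}}_{k-1}(H_i)$. This estimate is the crux of the proof, and the only delicate point: one must keep careful track of how the unbounded factor $\lVert Ax\rVert_{H_{i-1}}$ forces both the loss of one order of differentiability ($k\to k-1$) and the strict enlargement of the exponential weight ($\beta_1\to\beta_2$), so that no single weight would suffice.

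It remains to check that $\tilde A$ respects the closed subspaces, i.e.\ that $\tilde Af\in\mathcal{B}^{\psi_{i,\beta_2}}_{k-1}(H_i)$. For $f=g(\langle\cdot,e_1\rangle_{H_{i-1}},\dots,\langle\cdot,e_N\rangle_{H_{i-1}})\in\mathcal{A}(H_{i-1})$ one expands $\tilde Af(x)=\sum_{n=1}^{N}(\partial_ng)(\langle x,e_1\rangle_{H_{i-1}},\dots)\,\langle x,A^{*}e_n\rangle_{H_i}$, where $A^{*}\colon H_{i-1}\to H_i$ is the adjoint of $A\colon H_i\to H_{i-1}$; this is a finite sum of products of a bounded smooth cylindrical function of $x\in H_i$ with a single $H_i$-coordinate functional. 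Approximating each such functional by its projection onto finitely many (simultaneous) basis vectors together with a smooth compactly supported cutoff produces elements of $\mathcal{A}(H_i)$ converging to $\tilde Af$ in $\lVert\cdot\rVert_{\psi_{i,\beta_2},k-1}$ — the weight $\cosh(\beta_2\lVert\cdot\rVert_{H_i})$ dominates the linear growth of the tails — exactly as in the proof of Theorem~\ref{thm:CbkBpsikdense}, worked with the simultaneous orthonormal basis used there. Hence $\tilde A(\mathcal{A}(H_{i-1}))\subset\mathcal{B}^{\psi_{i,\beta_2}}_{k-1}(H_i)$.

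Finally, combining the two parts: $\tilde A$ is bounded from $\mathrm{B}^{\psi_{i-1,\beta_1}}_k(H_{i-1})$ into $\mathrm{B}^{\psi_{i,\beta_2}}_{k-1}(H_i)$ by the estimate, and it sends the dense subspace $\mathcal{A}(H_{i-1})$ into the closed subspace $\mathcal{B}^{\psi_{i,\beta_2}}_{k-1}(H_i)$ by the last step; for $f\in\mathcal{B}^{\psi_{i-1,\beta_1}}_k(H_{i-1})$ pick $f_m\in\mathcal{A}(H_{i-1})$ with $f_m\to f$, so $\tilde Af_m\to\tilde Af$ in $\mathrm{B}^{\psi_{i,\beta_2}}_{k-1}(H_i)$ with $\tilde Af_m\in\mathcal{B}^{\psi_{i,\beta_2}}_{k-1}(H_i)$, whence $\tilde Af\in\mathcal{B}^{\psi_{i,\beta_2}}_{k-1}(H_i)$. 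This yields the asserted bounded linear operator, and everything beyond the second displayed estimate is routine bookkeeping with multilinear-form norms and the density results already established.
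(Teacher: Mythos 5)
Your proof is correct and follows essentially the same route as the paper: the quantitative heart is the same inequality $u\cosh(\beta_1 u)\le C\cosh(\beta_2 u)$ absorbing the linear growth of $\lVert Ax\rVert_{H_{i-1}}$ into the strictly enlarged weight, and your expansion of $\tilde Af$ via the adjoint $A^{*}$ is the paper's observation $Df(x)(Ax)=Df(x)(\pi Ax)$ for cylindrical $f$ in a different guise, followed by the same density argument. Your additional care in noting that $\tilde Af$ has linear growth and therefore lands in the closure $\mathcal{B}^{\psi_{i,\beta_2}}_{k-1}(H_i)$ rather than literally in the space $\mathcal{A}(H_i)$ of \emph{bounded} cylindrical functions is a worthwhile refinement of the paper's terser claim, and is all that the subsequent density argument actually requires.
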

\begin{proof}
	Given $f\in\mathcal{A}(H_{i-1})$, there exists a $H_{i-1}$-orthogonal projection $\pi$ with finite-dimensional range such that $f\circ\pi=f$.
	Hence, $Df(x)(Ax)=Df(x)(\pi Ax)$, and it is easy to see that this function is in $\mathcal{A}(H_i)$.
	The boundedness is again shown by a direct calculation, where we apply that $u\cosh(\beta_1 u)\le C\cosh(\beta_2 u)$ for all $u\in[0,\infty)$ with some constant $C>0$.
\end{proof}
An application of It\^o's formula, see \cite[Theorem~7.2.1]{DaPratoZabczyk2002}, yields
that for $i\in\left\{ 1,\dots,\ell \right\}$ and $f\in\mathcal{A}(H_{i-1})$, 
\begin{equation}
	\label{eq:infgen}
	\mathcal{G}f(x)
	=
	Df(x)(Ax) + (V_0f)(x) + \frac{1}{2}\sum_{j=1}^{d}(V_j^{2}f)(x)
	\quad\text{for $x\in H_i$}.
\end{equation}
\begin{theorem}
	\label{thm:mappinginfgen}
	Fix $i\in\left\{ 1,\dots,\ell \right\}$.
	For $j\ge0$ and $0<\beta_1<\beta_2$, the operator $\tilde{\mathcal{G}}\colon\mathcal{B}^{\psi_{\beta_1}}_{j+2}(H_{i-1})\to\mathcal{B}^{\psi_{\beta_2}}_j(H_{i})$, given by the right hand side of \eqref{eq:infgen}, is well-defined as a bounded linear operator.
	Furthermore, for $\beta\in(0,\beta_0)$, $\mathcal{B}^{\psi_{\ell_0-1,\beta}}_2(H_{\ell_0-1})\subset\dom\mathcal{G}$, and on this space, $\mathcal{G}=\tilde{\mathcal{G}}$.
\end{theorem}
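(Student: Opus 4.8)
The plan is to prove the two assertions separately: first the boundedness of $\tilde{\mathcal{G}}$, and then the identification $\mathcal{G}=\tilde{\mathcal{G}}$ on the indicated domain by a density-and-closedness argument.

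For the boundedness, I would split $\tilde{\mathcal{G}}$ into its three constituents according to \eqref{eq:infgen}: the transport part $f\mapsto Df(\cdot)(A\cdot)$, the drift part $f\mapsto V_0f$, and the second-order part $f\mapsto\tfrac12\sum_{j=1}^d V_j^2 f$. The transport part is handled directly by Lemma~\ref{lem:estimategeneratorlieder}: precomposing the trivial inclusion $\mathcal{B}^{\psi_{i-1,\beta_1}}_{j+2}(H_{i-1})\hookrightarrow\mathcal{B}^{\psi_{i-1,\beta_1}}_{j+1}(H_{i-1})$ with the bounded operator $\mathcal{B}^{\psi_{i-1,\beta_1}}_{j+1}(H_{i-1})\to\mathcal{B}^{\psi_{i,\beta_2}}_j(H_i)$ from that lemma gives the bound, and this is the one place where the strict inequality $\beta_1<\beta_2$ is genuinely used (it is exactly what makes $u\cosh(\beta_1u)\le C\cosh(\beta_2u)$ hold). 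For the drift and second-order parts, note that $V_j=g_j\circ L$ takes values in $H_\ell\subseteq H_i$, so the directional derivatives computed on $H_{i-1}$ agree, after restriction, with those computed on $H_i$; I would first record the auxiliary embedding $\mathcal{B}^{\psi_{i-1,\beta_1}}_{k}(H_{i-1})\hookrightarrow\mathcal{B}^{\psi_{i,\beta_2}}_{k}(H_i)$, which holds because, after the harmless rescaling making $\lVert\cdot\rVert_{H_{i-1}}\le\lVert\cdot\rVert_{H_i}$, one has the pointwise weight comparison $\psi_{i-1,\beta_1}\le\psi_{i,\beta_2}$ on $H_i$, and then apply Lemma~\ref{lem:estimatesmoothlieder} with weight $\psi_{i,\beta_2}$ once for $V_0$ and twice for $V_j^2=V_j\circ V_j$, the loss of one resp.\ two derivatives being absorbed into the index $j+2$ of the source space. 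Summing the three bounded linear operators gives that $\tilde{\mathcal{G}}\colon\mathcal{B}^{\psi_{\beta_1}}_{j+2}(H_{i-1})\to\mathcal{B}^{\psi_{\beta_2}}_j(H_i)$ is bounded.

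For the second assertion I would specialise to $i=\ell_0$, $j=0$, $\beta_1=\beta\in(0,\beta_0)$ and $\beta_2=\beta_0$, so that by the first part $\tilde{\mathcal{G}}$ is a bounded operator $\mathcal{B}^{\psi_{\ell_0-1,\beta}}_2(H_{\ell_0-1})\to\mathcal{B}^{\psi_{\ell_0,\beta_0}}_0(H_{\ell_0})$, the target being exactly the space on which $\mathcal{G}$ acts. Fix $f\in\mathcal{B}^{\psi_{\ell_0-1,\beta}}_2(H_{\ell_0-1})$; by definition of $\mathcal{B}^{\psi}_2$ as the closure of $\mathcal{A}(H_{\ell_0-1})$ there are $f_n\in\mathcal{A}(H_{\ell_0-1})$ with $f_n\to f$ in $\mathcal{B}^{\psi_{\ell_0-1,\beta}}_2(H_{\ell_0-1})$. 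Composing with the auxiliary embedding above (legitimate since $\beta<\beta_0$) and with the degree-reduction embedding gives $f_n\to f$ also in $\mathcal{B}^{\psi_{\ell_0,\beta_0}}_0(H_{\ell_0})$, while boundedness of $\tilde{\mathcal{G}}$ gives $\tilde{\mathcal{G}}f_n\to\tilde{\mathcal{G}}f$ in $\mathcal{B}^{\psi_{\ell_0,\beta_0}}_0(H_{\ell_0})$. By \eqref{eq:infgen}, each $f_n$ lies in $\dom\mathcal{G}$ with $\mathcal{G}f_n=\tilde{\mathcal{G}}f_n$. Since $\mathcal{G}$, as the generator of a strongly continuous semigroup, is a closed operator, passing to the limit yields $f\in\dom\mathcal{G}$ and $\mathcal{G}f=\tilde{\mathcal{G}}f$, which is the claim.

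The argument is in essence a bookkeeping exercise resting on Lemmas~\ref{lem:estimatesmoothlieder} and~\ref{lem:estimategeneratorlieder} together with closedness of $\mathcal{G}$; the only genuinely delicate points are (i) tracking which Hilbert space and which weight index each intermediate object lives in — in particular justifying the embedding $\mathcal{B}^{\psi_{i-1,\beta_1}}_{k}(H_{i-1})\hookrightarrow\mathcal{B}^{\psi_{i,\beta_2}}_{k}(H_i)$ and pinning down where $\beta_1<\beta_2$ is actually required — and (ii) checking that the directional derivatives $V_jf$ are unambiguous when passing from $H_{i-1}$ to $H_i$, which works precisely because the $V_j$ are $H_\ell$-valued. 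I expect (i) to be the main thing that needs care.
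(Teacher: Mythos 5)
Your proposal is correct and follows essentially the same route as the paper: boundedness of $\tilde{\mathcal{G}}$ by combining Lemmas~\ref{lem:estimatesmoothlieder} and~\ref{lem:estimategeneratorlieder} (the paper leaves the intermediate embeddings between the weighted spaces implicit, which you spell out), and the identification $\mathcal{G}=\tilde{\mathcal{G}}$ via density of $\mathcal{A}(H_{\ell_0-1})$, the validity of \eqref{eq:infgen} on cylinder functions, and closedness of the generator. No gaps beyond the level of detail the paper itself accepts.
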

\begin{proof}
	The boundedness of $\tilde{\mathcal{G}}$ follows from Lemmas~\ref{lem:estimatesmoothlieder} and \ref{lem:estimategeneratorlieder}.
	For the second property, note that $\tilde{\mathcal{G}}$ maps $\mathcal{B}^{\psi_{\ell_0-1,\beta}}_2(H_{\ell_0-1})$ into $\mathcal{B}^{\psi_{\ell_0,\beta_0}}_0(H_{\ell_0})$ as a bounded linear operator, $\mathcal{G}=\tilde{\mathcal{G}}$ on $\mathcal{A}(H_{\ell_0-1})$, and that $\mathcal{G}$ is a closed operator.
	Hence, $\mathcal{G}=\tilde{\mathcal{G}}$ follows from a density argument.
\end{proof}
\begin{corollary}
	Fix $\beta\in(0,\beta_0)$.
	Given $k\in\left\{ 0,\dots,\ell_0-1 \right\}$, we have the Taylor expansion
	\begin{equation}
		P_t f
		=
		\sum_{j=0}^{k}\frac{t^j}{j!}\mathcal{G}^j f + R_{t,k}f
		\quad\text{for $f\in\mathcal{B}^{\psi_{\ell_0-(k+1),\beta}}_{2(k+1)}(H_{\ell_0-(k+1)})$},
	\end{equation}
	where the operator $R_{t,k}\colon\mathcal{B}^{\psi_{\ell_0-(k+1),\beta}}_{2(k+1)}(H_{\ell_0-(k+1)})\to\mathcal{B}^{\psi_{\ell_0,\beta_0}}(H_{\ell_0})$ is bounded uniformly in $t\in[0,\varepsilon]$ for given $\varepsilon>0$.
\end{corollary}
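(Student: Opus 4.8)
The plan is to combine the classical integral form of Taylor's theorem for strongly continuous semigroups with the mapping properties of the generator established in Theorem~\ref{thm:mappinginfgen}, iterated along an increasing chain of weight exponents. Concretely, for a strongly continuous semigroup with generator $\mathcal{G}$, every $g\in\dom\mathcal{G}^{k+1}$ satisfies
\begin{equation*}
	P_t g = \sum_{j=0}^{k}\frac{t^j}{j!}\mathcal{G}^j g + \frac{1}{k!}\int_{0}^{t}(t-s)^k P_s\mathcal{G}^{k+1}g\,\dd s,
\end{equation*}
which follows by a standard induction on $k$ from the identity $\frac{\dd}{\dd s}P_s g=P_s\mathcal{G}g=\mathcal{G}P_s g$ (valid on $\dom\mathcal{G}$) together with Fubini's theorem; the Bochner integral is well defined since $s\mapsto P_s\mathcal{G}^{k+1}g$ is continuous in $\mathcal{B}^{\psi_{\ell_0,\beta_0}}_0(H_{\ell_0})$. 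Thus it suffices to prove that every $f\in\mathcal{B}^{\psi_{\ell_0-(k+1),\beta}}_{2(k+1)}(H_{\ell_0-(k+1)})$ lies in $\dom\mathcal{G}^{k+1}$ with $\mathcal{G}^j f=\tilde{\mathcal{G}}^j f$ for $j=0,\dots,k+1$, and that the remainder $R_{t,k}f:=\frac{1}{k!}\int_0^t(t-s)^k P_s\mathcal{G}^{k+1}f\,\dd s$ is bounded uniformly in $t\in[0,\varepsilon]$.

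For the first point, fix a strictly increasing sequence $\beta=\gamma_0<\gamma_1<\dots<\gamma_{k+1}\le\beta_0$. Applying Theorem~\ref{thm:mappinginfgen} at the $m$-th step with $i=\ell_0-(k+1)+m$ (which runs exactly through $\{1,\dots,\ell_0\}$ precisely because $k\le\ell_0-1$) and exponents $(\gamma_{m-1},\gamma_m)$ shows that $\tilde{\mathcal{G}}$ maps $\mathcal{B}^{\psi_{\ell_0-(k+1)+m-1,\gamma_{m-1}}}_{2(k+2-m)}(H_{\ell_0-(k+1)+m-1})$ boundedly into $\mathcal{B}^{\psi_{\ell_0-(k+1)+m,\gamma_{m}}}_{2(k+1-m)}(H_{\ell_0-(k+1)+m})$; iterating $k+1$ times, $\tilde{\mathcal{G}}^{k+1}$ is a bounded operator from $\mathcal{B}^{\psi_{\ell_0-(k+1),\beta}}_{2(k+1)}(H_{\ell_0-(k+1)})$ into $\mathcal{B}^{\psi_{\ell_0,\gamma_{k+1}}}_0(H_{\ell_0})\subset\mathcal{B}^{\psi_{\ell_0,\beta_0}}_0(H_{\ell_0})$. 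On $\mathcal{A}(H_{\ell_0-(k+1)})$, Lemmas~\ref{lem:estimatesmoothlieder} and~\ref{lem:estimategeneratorlieder} keep all intermediate iterates inside the corresponding spaces $\mathcal{A}(H_{\ell_0-(k+1)+m})$, and \eqref{eq:infgen} identifies $\tilde{\mathcal{G}}^j=\mathcal{G}^j$ there. Given a general $f$ in the above space, choose $f_n\in\mathcal{A}(H_{\ell_0-(k+1)})$ with $f_n\to f$ in $\mathcal{B}^{\psi_{\ell_0-(k+1),\beta}}_{2(k+1)}(H_{\ell_0-(k+1)})$; then $\tilde{\mathcal{G}}^j f_n\to\tilde{\mathcal{G}}^j f$ in $\mathcal{B}^{\psi_{\ell_0,\beta_0}}_0(H_{\ell_0})$ for each $j\le k+1$, and since the powers of $\mathcal{G}$ are closed on $\mathcal{B}^{\psi_{\ell_0,\beta_0}}_0(H_{\ell_0})$ (being powers of the generator of a strongly continuous semigroup), we conclude $f\in\dom\mathcal{G}^{k+1}$ and $\mathcal{G}^j f=\tilde{\mathcal{G}}^j f$. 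The inclusion $\mathcal{B}^{\psi_{\ell_0-(k+1),\beta}}_{2(k+1)}(H_{\ell_0-(k+1)})\subset\mathcal{B}^{\psi_{\ell_0,\beta_0}}_0(H_{\ell_0})$ needed merely to interpret $\dom\mathcal{G}^{k+1}$ here follows, as in the proof of Theorem~\ref{thm:mappinginfgen}, from the continuity of the embedding $H_{\ell_0}\hookrightarrow H_{\ell_0-(k+1)}$ and the comparison of $\cosh$-weights.

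For the second point, $(P_s)$ being strongly continuous on $\mathcal{B}^{\psi_{\ell_0,\beta_0}}_0(H_{\ell_0})$ yields $C_\varepsilon:=\sup_{s\in[0,\varepsilon]}\lVert P_s\rVert_{L(\mathcal{B}^{\psi_{\ell_0,\beta_0}}_0(H_{\ell_0}))}<\infty$, and hence, with the operator bound for $\mathcal{G}^{k+1}=\tilde{\mathcal{G}}^{k+1}$ from the first point,
\begin{equation*}
	\lVert R_{t,k}f\rVert_{\psi_{\ell_0,\beta_0}}
	\le\frac{C_\varepsilon\,\varepsilon^{k+1}}{(k+1)!}\,\lVert\tilde{\mathcal{G}}^{k+1}\rVert\,\lVert f\rVert_{\psi_{\ell_0-(k+1),\beta},2(k+1)}
	\qquad\text{for all $t\in[0,\varepsilon]$},
\end{equation*}
which is the asserted uniform bound; since $f\in\dom\mathcal{G}^{k+1}$, the finite sum $\sum_{j=0}^k\frac{t^j}{j!}\mathcal{G}^j f$ is automatically a well-defined element of $\mathcal{B}^{\psi_{\ell_0,\beta_0}}_0(H_{\ell_0})$. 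I expect the real work to sit in the first point: one has to keep track simultaneously of the loss of two derivatives, the ascent of the Hilbert scale, and the increase of $\beta$ at each application of $\mathcal{G}$, and then lift the single-step, low-regularity identity $\mathcal{G}=\tilde{\mathcal{G}}$ of Theorem~\ref{thm:mappinginfgen} to its $(k+1)$-fold iterate via the density of $\mathcal{A}$ and the closedness of the powers of the generator.
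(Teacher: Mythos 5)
Your argument is correct and follows the same route as the paper, which simply notes that Theorem~\ref{thm:mappinginfgen} makes $\mathcal{G}^j$, $j=0,\dots,k+1$, bounded from $\mathcal{B}^{\psi_{\ell_0-(k+1),\beta}}_{2(k+1)}(H_{\ell_0-(k+1)})$ into $\mathcal{B}^{\psi_{\ell_0,\beta_0}}_0(H_{\ell_0})$ and then invokes ``a standard Taylor expansion argument.'' You have merely made explicit what the paper leaves implicit: the iteration of Theorem~\ref{thm:mappinginfgen} along the chain of indices and weight exponents, the identification $\mathcal{G}^j=\tilde{\mathcal{G}}^j$ via density of $\mathcal{A}$ and closedness of the generator's powers, and the integral form of the remainder with its uniform bound on $[0,\varepsilon]$.
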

\begin{proof}
	Theorem~\ref{thm:mappinginfgen} proves that $\mathcal{G}^{j}\colon\mathcal{B}^{\psi_{\ell_0-(k+1),\beta}}_{2(k+1)}(H_{\ell_0-(k+1)})\to\mathcal{B}^{\psi_{\ell_0,\beta_0}}_0(H_{\ell_0})$ is a bounded linear operator for $j=0,\dots,k+1$.
	Hence, a standard Taylor expansion argument can be applied to prove the stated theorem.
\end{proof}
\begin{lemma}
	\label{lem:smoothnesspreserving}
	For $\beta>0$, $k\ge0$ and $i\in\left\{ 1,\dots,\ell \right\}$, $P_t\colon \mathcal{B}^{\psi_{i,\beta}}_k(H_i)\to\mathcal{B}^{\psi_{i,\beta}}_k(H_i)$ is a bounded linear operator.
	Its operator norm is bounded uniformly for $t\in[0,T]$, where $T>0$ can be chosen arbitrarily.
\end{lemma}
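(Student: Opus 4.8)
The plan is to reduce the statement, by density, to an estimate on $\mathrm{C}_b^k(H_{i-1})$, to differentiate the representation $P_tf(x_0)=\mathbb{E}[f(x(t,x_0))]$ under the expectation using the variational processes of \eqref{eq:spde}, and to close the resulting bound by an It\^o--Gronwall argument of the type used in the proof of the $\cosh$-estimate above. Recall that $\mathrm{C}_b^k(H_{i-1})\subset\mathcal{B}^{\psi_{i,\beta}}_k(H_i)$ is dense by Theorem~\ref{thm:CbkBpsikdense}, that $P_t$ is already bounded on $\mathcal{B}^{\psi_{i,\beta}}_0(H_i)$ with operator norm bounded uniformly on $[0,T]$ (strong continuity together with the semigroup property), and that — since $V_j(x)=g_j(Lx)$ with $g_j\in\mathrm{C}_b^\infty(\mathbb{R}^N;H_\ell)$ and $L$ bounded, so that the references for \eqref{eq:spde} and the method of the moving frame \cite{Teichmann2009} apply — the flow $x_0\mapsto x(t,x_0)$ is almost surely $\mathrm{C}^k$ from $H_{i-1}$ into itself with bounded derivatives, whence $P_t$ maps $\mathrm{C}_b^k(H_{i-1})$ into $\mathrm{C}_b^k(H_{i-1})\subset\mathcal{B}^{\psi_{i,\beta}}_k(H_i)$. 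It therefore suffices to produce a constant $C_T$ independent of $f$ with $\lVert P_tf\rVert_{\psi_{i,\beta},k}\le C_T\lVert f\rVert_{\psi_{i,\beta},k}$ for $f\in\mathrm{C}_b^k(H_{i-1})$ and $t\in[0,T]$; the general claim then follows by extending $P_t|_{\mathrm{C}_b^k(H_{i-1})}$ to a bounded operator on $\mathcal{B}^{\psi_{i,\beta}}_k(H_i)$ and noting that this extension agrees with $P_t$ on the dense set $\mathrm{C}_b^k(H_{i-1})$ while both are continuous into $\mathcal{B}^{\psi_{i,\beta}}_0(H_i)$.

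Fix such an $f$ and $t\in[0,T]$. Differentiating under the expectation, I would expand $D^j(P_tf)(x_0)$ by the Fa\`a di Bruno formula into a finite sum of terms $\mathbb{E}[D^rf(x(t,x_0))(J^{(m_1)}_t(h_{B_1}),\dots,J^{(m_r)}_t(h_{B_r}))]$ indexed by the partitions $\{B_1,\dots,B_r\}$ of $\{1,\dots,j\}$, with $m_l=|B_l|$ and $J^{(m)}_t$ the $m$-th order variational process of \eqref{eq:spde}, solving the linear SPDE obtained by differentiating \eqref{eq:spde} $m$ times; thanks to the embeddings $H_\ell\hookrightarrow H_i\hookrightarrow H_0$, all of its coefficients ($D^{m'}V_j$ evaluated along the solution) are bounded uniformly in the space variable. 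Using $\lVert D^rf(x)\rVert_{L_r(H_i)}\le\lVert f\rVert_{\psi_{i,\beta},k}\,\psi_{i,\beta}(x)$ for $r\le k$, the problem reduces to the bound
\[
	\mathbb{E}\Bigl[\cosh\bigl(\beta\lVert x(t,x_0)\rVert_{H_i}\bigr)\prod_{l=1}^{r}\lVert J^{(m_l)}_t(h_{B_l})\rVert_{H_i}\Bigr]
	\le C_T\cosh\bigl(\beta\lVert x_0\rVert_{H_i}\bigr),
\]
uniformly in $x_0\in H_i$ and over unit vectors $h_1,\dots,h_j\in H_i$, after which one sums over $j=0,\dots,k$, the case $j=0$ being the already-known bound on $\mathcal{B}^{\psi_{i,\beta}}_0(H_i)$.

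The displayed bound is the heart of the matter, and I would obtain it exactly as in the proof of the $\cosh$-estimate above. By the uniform boundedness of the $D^{m'}V_j$ and the arithmetic--geometric mean inequality, it is enough to bound $\mathbb{E}[\cosh(\beta\lVert x(t,x_0)\rVert_{H_i})\prod_{a=1}^{m}(1+\lVert J^{(a)}_t(\cdot)\rVert_{H_i}^2)^{p_a}]$ for arbitrary exponents $p_a\ge0$, and this I would do by induction on $m$: for $m=0$ it is the $\cosh$-estimate itself, and in the inductive step one applies It\^o's formula to this scalar functional (the finiteness of the expectations, and the unbounded operator $A$, being handled — as everywhere in this section — by the method of the moving frame, which also makes the dissipativity of $A$ usable to discard the $A$-contributions). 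The uniform boundedness of $V_j$ and its derivatives then bounds the drift by a constant multiple of the functional itself plus an inhomogeneous term which, because the variational equations are linear, involves only the lower-order processes $J^{(1)},\dots,J^{(m-1)}$ raised to higher powers and is thus controlled by the induction hypothesis; Gronwall's inequality closes the estimate. Crucially, since the $D^{m'}V_j$ are bounded uniformly in the space variable, the variational factors contribute only constants and no enlargement of the weight exponent $\beta$ is forced, so one stays inside $\mathcal{B}^{\psi_{i,\beta}}_k(H_i)$.

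I expect the main obstacle to be exactly this inductive moment estimate: one must differentiate the nonlinear SPDE \eqref{eq:spde} rigorously in spite of the unbounded drift $A$, and obtain the constant uniformly both in the base point $x_0$ and in the directions $h_l$ — the latter being what forbids, in infinite dimensions, a naive interchange of the supremum defining $\lVert\cdot\rVert_{L_j(H_i)}$ with the expectation, and the reason one estimates the scalar functional $\cosh(\beta\lVert x(t,x_0)\rVert_{H_i})\prod_a(1+\lVert J^{(a)}_t(\cdot)\rVert_{H_i}^2)^{p_a}$ for unit directions rather than operator norms of the variational processes.
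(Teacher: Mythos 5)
Your proposal is correct and its skeleton coincides with the paper's: reduce by density (Theorem~\ref{thm:CbkBpsikdense}) to smooth cylindrical/bounded test functions, differentiate $P_tf(x_0)=\mathbb{E}[f(x(t,x_0))]$ under the expectation via the sensitivity (variational) equations of \eqref{eq:spde}, and control the resulting terms using the boundedness of the $V_j$ and their derivatives together with the $\cosh$-weight estimate. The one place where you genuinely diverge is the decoupling of the weight factor from the variational factors. You propose a joint It\^o--Gronwall induction on the mixed functional $\mathbb{E}\bigl[\cosh(\beta\lVert x(t,x_0)\rVert_{H_i})\prod_a(1+\lVert J^{(a)}_t\rVert_{H_i}^2)^{p_a}\bigr]$; this works but forces you to track cross-variation terms between the solution and all variational processes. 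The paper instead separates the two factors by Cauchy--Schwarz: it quotes from \cite[Theorem~7.3.6]{DaPratoZabczyk2002} the moment bounds $\mathbb{E}[\lVert D_{x_0}^j x(t,x_0)(h_1,\dots,h_j)\rVert_{H_i}^p]\le C_p(\lVert h_1\rVert_{H_i}\dotsm\lVert h_j\rVert_{H_i})^p$ with $C_p$ independent of $x_0$, and then absorbs the doubled weight exponent coming from $\mathbb{E}[\psi_{i,\beta}(x_t)^2]^{1/2}$ via the two-sided inequality $c\cosh(2u)\le\cosh(u)^2\le C\cosh(2u)$, so that one lands back in the same weight $\psi_{i,\beta}$ with no enlargement of $\beta$. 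That identity is the small but essential trick your write-up replaces by the harder joint Gronwall argument; adopting it would let you cite the existing moment bounds for the variational processes separately (your inductive estimate for $\mathbb{E}[\prod\lVert J^{(a)}\rVert^{2p_a}]$ alone, without the $\cosh$ factor) and shorten the proof considerably. One minor imprecision: the flow is not almost surely $\mathrm{C}^k$ with \emph{pathwise} bounded derivatives; what the sensitivity equations give, and what suffices to differentiate under the expectation and to see $P_tf\in\mathrm{C}_b^k(H_{i-1})$, are mean-square differentiability and $L^p(\Omega)$-bounds uniform in $x_0$.
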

\begin{proof}
	This is consequence of smooth dependence on the initial value in $H_{i-1}$. 
	By considering the sensitivity equations, see \cite[Theorem~7.3.6]{DaPratoZabczyk2002}, all derivatives $D_{x_0}^j x(t,x_0)(h_1,\dots,h_j)$ are shown to satisfy bounds of the type
	\begin{equation}
		\mathbb{E}[\lVert D_{x_0}^j x(t,x_0)(h_1,\dots,h_j)\rVert_{H_i}^p]
		\le C_p\left(\lVert h_1\rVert_{H_i}\dotsm\lVert h_j\rVert_{H_i}\right)^p
		\quad\text{for $p\ge 2$},
	\end{equation}
	where $C_p$ is independent of $x_0$.
	The boundedness of $P_t$ in the norms given above then follows from the Cauchy-Schwarz inequality together with the property \newline $c\cosh(2u)\le\cosh(u)^2\le C\cosh(2u)$ for some constants $c$, $C>0$.
	Due to Theorem~\ref{thm:CbkBpsikdense}, it follows that $P_t(\mathcal{A}(H_{i-1}))\subset\mathcal{B}^{\psi_{i,\beta}}_k(H_i)$.
	A density argument proves the claim.
\end{proof}
\begin{corollary}
	For $k\ge 2$, $i\in\left\{ 0,\dots,\ell_0-1 \right\}$ and $\beta\in(0,\beta_0)$, $\mathcal{B}^{\psi_{i,\beta}}_k(H_{i})$ is a core for $\mathcal{G}$.
\end{corollary}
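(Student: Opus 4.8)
The plan is to invoke the standard core criterion for generators of strongly continuous semigroups \cite[Proposition~II.1.7]{EngelNagel2000}: a subspace $D\subseteq\dom\mathcal G$ that is dense in the ambient Banach space $\mathcal B^{\psi_{\ell_0,\beta_0}}_0(H_{\ell_0})$ and invariant under $(P_t)_{t\ge0}$ is automatically a core for $\mathcal G$. I would apply this with $D=\mathcal B^{\psi_{i,\beta}}_k(H_i)$. Since $i\le\ell_0-1$ and $H_{\ell_0}$ is dense in $H_i$, the restriction map $f\mapsto f|_{H_{\ell_0}}$ is injective, and it is a bounded inclusion $\mathcal B^{\psi_{i,\beta}}_k(H_i)\hookrightarrow\mathcal B^{\psi_{\ell_0,\beta_0}}_0(H_{\ell_0})$: the forms $\lVert D^j f(x)\rVert_{L_j(\cdot)}$ can only decrease when their arguments are constrained to the smaller space, while $\lVert\cdot\rVert_{H_i}\le\lVert\cdot\rVert_{H_{\ell_0}}$ (standard normalisation of the embeddings) together with $\cosh(\beta u)\le C_{\beta,\beta_0}\cosh(\beta_0 u)$ for $\beta<\beta_0$ makes the two weights comparable. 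Throughout I silently make this identification.

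With it, $D\subseteq\dom\mathcal G$ is immediate: restriction carries $\mathcal B^{\psi_{i,\beta}}_k(H_i)$ continuously into $\mathcal B^{\psi_{\ell_0-1,\beta}}_k(H_{\ell_0-1})$, and since $k\ge2$ this lies inside $\mathcal B^{\psi_{\ell_0-1,\beta}}_2(H_{\ell_0-1})$, which by Theorem~\ref{thm:mappinginfgen} is contained in $\dom\mathcal G$ for every $\beta\in(0,\beta_0)$, with $\mathcal G$ acting there through the explicit formula \eqref{eq:infgen}.

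For density, I would pass to the orthonormal basis of $H_0$ that is simultaneously orthogonal in all the $H_j$ (Proposition~\ref{prop:simultaneousOB}) and build the spaces $\mathcal A(\cdot)$ from the corresponding normalised bases, exactly as in the proof of Theorem~\ref{thm:CbkBpsikdense}. Then $\mathcal A(H_i)$ and $\mathcal A(H_{\ell_0})$ coincide as sets of functions on $H_{\ell_0}$, the coordinate functionals differing only by positive scalar factors that are absorbed into the generating $\mathrm C_b^{\infty}$-functions. Hence $\mathcal A(H_{\ell_0})\subseteq\mathcal B^{\psi_{i,\beta}}_k(H_i)$ under the identification, and as $\mathcal A(H_{\ell_0})$ is by definition dense in $\mathcal B^{\psi_{\ell_0,\beta_0}}_0(H_{\ell_0})$, so is $D$.

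It remains to check $P_t D\subseteq D$ and then quote the criterion. For $i\in\{1,\dots,\ell_0-1\}$ this is exactly Lemma~\ref{lem:smoothnesspreserving}. For $i=0$ the same argument applies: the smoothness and boundedness of the $V_j=g_j\circ L$ with $g_j\in\mathrm C_b^{\infty}$, together with the dissipativity of $A$, yield via the sensitivity equations \cite[Theorem~7.3.6]{DaPratoZabczyk2002} uniform moment bounds for the derivatives $D_{x_0}^j x(t,x_0)$ already in $H_0$, so $P_t f\in\mathrm C^k(H_0)$ with the required $\psi_{0,\beta}$-weighted bounds, and membership of $P_t f$ in the closure $\mathcal B^{\psi_{0,\beta}}_k(H_0)$ of $\mathcal A(H_0)$ then follows by a density argument. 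I expect the only genuinely delicate point to be this invariance step — keeping the tower of nested weighted spaces consistent under restriction, and in particular running the argument of Lemma~\ref{lem:smoothnesspreserving} at the level $i=0$ where Theorem~\ref{thm:CbkBpsikdense} is not directly available; everything else is bookkeeping with estimates already established above.
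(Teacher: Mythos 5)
Your proposal follows essentially the same route as the paper's own (one-line) proof: the core criterion of \cite[Proposition~II.1.7]{EngelNagel2000} applied to the dense, semigroup-invariant subspace $\mathcal{B}^{\psi_{i,\beta}}_k(H_{i})\subset\dom\mathcal{G}$, with invariance supplied by Lemma~\ref{lem:smoothnesspreserving} and membership in $\dom\mathcal{G}$ by Theorem~\ref{thm:mappinginfgen}. You are in fact somewhat more careful than the paper, in particular in making the identifications between the nested weighted spaces explicit and in flagging that Lemma~\ref{lem:smoothnesspreserving} as stated only covers $i\ge 1$, so that the $i=0$ case of the invariance step needs the sensitivity-equation argument run one level lower.
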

\begin{proof}
	Applying \cite[Proposition~II.1.7]{EngelNagel2000}, this is clear from Lemma~\ref{lem:smoothnesspreserving}, as $\mathcal{B}^{\psi_{i,\beta}}_2(H_{i})\subset\dom\mathcal{G}$ is invariant with respect to the semigroup and dense in $\mathcal{B}^{\psi_{i,\beta}}_0(H_{i})$.
\end{proof}

\section{The rate of convergence of splitting schemes for stochastic partial differential equations}
As numerical discretisation scheme, we suggest the use of a splitting scheme.
Decomposing the drift coefficient further, $V_0=\sum_{m=1}^{M}V_{0,m}$, we define the split problems
\begin{subequations}
	\begin{alignat}{5}{}
		\frac{\dd}{\dd t}x_{0,0}(t,x_0) &= Ax_{0,0}(t,x_0), \\
		\frac{\dd}{\dd t}x_{0,m}(t,x_0) &= V_{0,m}(x_{0,m}(t,x_0)), & m=1,\dots,M, \\
		\dd x_j(t,x_0)                  &= V_j(x_j(t,x_0))\circ\dd W^j_t, & j=1,\dots,d.
	\end{alignat}
\end{subequations}
We stress that all of these problems can be solved by finding the corresponding deterministic flows; in the case of $j=1,\dots,d$; we need to evaluate the flow induced by the vector field $V_j$ at the stochastic time $W^j_t$.
In particular, the processes $(x_{0,m}(t,x_0))_{t\ge 0}$, $m=0,\dots,M$, are deterministic.
The split semigroups are defined by $P^{0,m}_tf(x_0):=f(x_{0,m}(t,x_0))$ and $P^j_t f(x_0):=\mathbb{E}[f(x_j(t,x_0))]$, $j=1,\dots,d$.
We consider the following splitting schemes.
\begin{description}
	\item[Lie-Trotter splitting, forward ordering]
		The Lie-Trotter splitting with forward ordering is of first order and reads
		\begin{equation}
			Q^{\mathrm{LTfwd}}_{(\Delta t)}f
			:=
			P^{0,0}_{\Delta t}P^{0,1}_{\Delta t}\dots P^{0,M}_{\Delta t}P^1_{\Delta t}\dots P^d_{\Delta t}f
			\quad\text{for $f\in\mathcal{B}^{\psi_{\ell_0,\beta_0}}(H_{\ell_0})$}.
		\end{equation}
	\item[Lie-Trotter splitting, backward ordering]
		The Lie-Trotter splitting with backward ordering is obtained by reversing the order of the operators in the Lie-Trotter splitting with forward ordering,
		\begin{equation}
			Q^{\mathrm{LTbwd}}_{(\Delta t)}f
			:=
			P^d_{\Delta t}\dots P^1_{\Delta t} P^{0,M}_{\Delta t}\dots P^{0,1}_{\Delta t} P^{0,0}_{\Delta t}f
			\quad\text{for $f\in\mathcal{B}^{\psi_{\ell_0,\beta_0}}(H_{\ell_0})$},
		\end{equation}
		and is also of first order.
	\item[Ninomiya-Victoir splitting]
		The Ninomiya-Victoir splitting is a generalisation of the well-known Strang splitting to more than two generators and reads
		\begin{alignat}{2}{}
			Q^{\mathrm{NV}}_{(\Delta t)}f
			:=
			\frac{1}{2}P^{0,0}_{\Delta t/2}
			\Bigl( 
			&P^{0,1}_{\Delta t}\dots P^{0,M}_{\Delta t}P^1_{\Delta t}\dots P^d_{\Delta t} \notag \\
			&+ P^d_{\Delta t}\dots P^1_{\Delta t}P^{0,M}_{\Delta t}\dots P^{0,1}_{\Delta t} 
			\Bigr)
			P^{0,0}_{\Delta t/2} f
			\quad\text{for $f\in\mathcal{B}^{\psi_{\ell_0,\beta_0}}(H_{\ell_0})$}.
		\end{alignat}
		It is of second order.
\end{description}
The theory of Section~\ref{sec:weightedspaces} now applies not only to the continuous semigroup $(P_t)_{t\ge 0}$, but also to every split semigroup $(P^{0,m}_t)_{t\ge 0}$ and $(P^j_t)_{t\ge0}$, yielding spaces invariant to the dynamics of $(P_t)_{t\ge0}$ on which we can apply the generators $\mathcal{G}$, $\mathcal{G}_{0,m}$ and $\mathcal{G}_j$, $m=0,\dots,M$ and $j=1,\dots,d$, and
\begin{equation}
	\mathcal{G}
	=
	\sum_{m=0}^{M}\mathcal{G}_{0,m} + \sum_{j=1}^{d}\mathcal{G}_j.
\end{equation}
Hence, we obtain the following result.
\begin{theorem}
	\label{thm:convrate}
	Let $\beta\in(0,\beta_0)$, and assume that $(Q_{(\Delta t)})_{\Delta t\ge 0}$ is any splitting approximation of $(P_t)_{t\ge 0}$ based on the split semigroups $(P^{0,m}_t)_{t\ge0}$ and $(P^j_t)_{t\ge0}$, $m=0,\dots,M$, $j=1,\dots,d$, which is of formal order $s\in\left\{ 1,\dots,\ell_0-1 \right\}$.
	For $f\in\mathcal{B}^{\psi_{0,\beta}}_{2(s+1)}(H_0)$, 
	\begin{equation}
		\lVert P_t f - Q_{(t/n)}^n f\rVert_{\psi_{s+1,\beta_0}}
		\le C_T n^{-s}\lVert f\rVert_{\psi_{0,\beta},2(s+1)}.
	\end{equation}
\end{theorem}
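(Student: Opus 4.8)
The plan is to follow the classical telescoping argument for one-step schemes, but carried out entirely within the scale of weighted spaces $\mathcal{B}^{\psi_{i,\beta}}_k(H_i)$ developed in Section~\ref{sec:weightedspaces}, losing one index $i$ (one degree of spatial regularity) and a bit of the exponential weight at each application of a generator. First I would establish the \emph{local error estimate}: since $(Q_{(\Delta t)})$ is of formal order $s$, the defining property is that $Q_{(\Delta t)}$ and $P_{\Delta t}$ have the same Taylor expansion in powers of $\Delta t$ up to order $s$, because the split generators sum to $\mathcal{G}$ and the composition of split semigroups reproduces $\sum_{j=0}^{s}\frac{(\Delta t)^j}{j!}\mathcal{G}^j$ modulo $(\Delta t)^{s+1}$. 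Applying the Taylor Corollary (the one following Theorem~\ref{thm:mappinginfgen}) to $(P_t)_{t\ge0}$ and, iteratively, to each split semigroup, both $P_{\Delta t}f$ and $Q_{(\Delta t)}f$ are of the form $\sum_{j=0}^{s}\frac{(\Delta t)^j}{j!}\mathcal{G}^j f + (\text{bounded remainder})\cdot(\Delta t)^{s+1}$, where the remainders map $\mathcal{B}^{\psi_{0,\beta}}_{2(s+1)}(H_0)$ boundedly into $\mathcal{B}^{\psi_{s+1,\beta_0}}_0(H_{s+1})$, uniformly for $\Delta t\in[0,\varepsilon]$. Subtracting,
\begin{equation}
	\lVert (P_{\Delta t}-Q_{(\Delta t)})f\rVert_{\psi_{s+1,\beta_0}}
	\le
	C(\Delta t)^{s+1}\lVert f\rVert_{\psi_{0,\beta},2(s+1)}.
\end{equation}
The bookkeeping here is the technical heart: one must check that composing the split semigroups costs exactly $s+1$ regularity indices in total (so one needs $f\in\mathcal{B}^{\psi}_{2(s+1)}(H_0)$ and lands in $H_{s+1}$) and that the products of the split remainder operators are still bounded between the appropriate spaces; Lemmas~\ref{lem:estimatesmoothlieder}, \ref{lem:estimategeneratorlieder} and \ref{lem:smoothnesspreserving} are exactly the tools for this.

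Next I would do the \emph{telescoping sum}. Write
\begin{equation}
	P_t f - Q_{(t/n)}^n f
	=
	\sum_{k=0}^{n-1} Q_{(t/n)}^{k}\bigl( P_{t/n} - Q_{(t/n)} \bigr) P_{t/n}^{n-1-k} f,
\end{equation}
using the semigroup property $P_t = P_{t/n}^n$. To estimate the general summand in the $\psi_{s+1,\beta_0}$-norm I apply, from right to left: $P_{t/n}^{n-1-k}$ is bounded on $\mathcal{B}^{\psi_{0,\beta}}_{2(s+1)}(H_0)$ uniformly in $n$ by Lemma~\ref{lem:smoothnesspreserving} (so the regularity and the smaller weight $\beta$ are preserved); then the local error operator $P_{t/n}-Q_{(t/n)}$ sends this into $\mathcal{B}^{\psi_{s+1,\beta_0}}_0(H_{s+1})$ with a factor $C(t/n)^{s+1}$; finally $Q_{(t/n)}^{k}$ must be bounded on $\mathcal{B}^{\psi_{s+1,\beta_0}}_0(H_{s+1})$ uniformly in $n$ and $k$. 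Summing $n$ terms each of size $C(t/n)^{s+1}$ gives $Cn\cdot(t/n)^{s+1} = Ct^{s+1}n^{-s}$, which is the claimed bound with $C_T$ absorbing $t^{s+1}\le T^{s+1}$.

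The \textbf{main obstacle} is the uniform-in-$n$ stability of the iterated scheme $Q_{(t/n)}^{k}$ as operators on the target space $\mathcal{B}^{\psi_{s+1,\beta_0}}_0(H_{s+1})$. Each factor $P^{0,m}_{\Delta t}$ and $P^j_{\Delta t}$ is power-bounded on its own weighted space by the Corollary after the $\cosh$-moment Lemma (applied to each split equation, whose vector fields inherit the structural assumption $V_j=g_j\circ L$), but the bound for a single step is of the form $\lVert Q_{(\Delta t)}\rVert\le 1 + C\Delta t$ rather than $\le 1$, so composing $n$ of them gives $(1+Ct/n)^n\le e^{Ct}$, which is uniform — this is the standard Lady Windermere's fan argument, and the weighted-space setting does not break it because the constant $C$ in $\mathbb{E}[\psi(x(\Delta t,x_0))]\le(1+C\Delta t)\psi(x_0)$ is independent of the step. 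One subtlety worth isolating: the operator $P^{0,0}_{\Delta t}=S_{\Delta t}$ is a contraction on each $H_i$ by hypothesis, so it is a genuine contraction (constant $1$) on each $\mathcal{B}^{\psi_{i,\beta}}_k$, and the only growth comes from the finitely many bounded-vector-field flows; this keeps the accumulated constant under control. A second, more bookkeeping-type obstacle is to verify that the formal order condition really does produce matching Taylor coefficients at the level of the \emph{operators} $\mathcal{G}^j$ acting between the weighted spaces (not merely formally), which again reduces to the mapping properties in Theorem~\ref{thm:mappinginfgen} and the fact that $\mathcal{G}=\sum_m\mathcal{G}_{0,m}+\sum_j\mathcal{G}_j$ holds on the common core $\mathcal{A}(H_{\ell_0-1})$, extended by density.
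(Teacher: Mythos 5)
Your argument is correct and is essentially the paper's proof: the paper verifies exactly the ingredients you identify — uniform $\exp(Ct)$-stability of all split semigroups on $\mathcal{B}^{\psi_{s+1,\beta_0}}_0(H_{s+1})$, invariance of $\mathcal{B}^{\psi_{0,\beta}}_{2(s+1)}(H_0)$ under $P_t$ via Lemma~\ref{lem:smoothnesspreserving}, and the consistency identity $\bigl(\sum_{m}\mathcal{G}_{0,m}+\sum_{j}\mathcal{G}_j\bigr)^\alpha=\mathcal{G}^\alpha$ for $\alpha=0,\dots,s+1$ on that space — and then delegates the local Taylor error plus Lady Windermere's fan telescoping that you write out explicitly to the abstract splitting convergence theorem of \cite[Theorem~2.3, Sections~4.1, 4.4]{HansenOstermann2009a}. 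You have simply unpacked that cited black box.
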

\begin{proof}
	The theory of \cite{HansenOstermann2009a} yields this result in the following manner.
	Clearly, all split semigroups are stable on the space $\mathcal{B}^{\psi_{s+1,\beta_0}}_0(H_{s+1})$ in the sense that the operator norms of the operators are bounded by $\exp(Ct)$ with some constant $C>0$ independent of $t$ and of the semigroup.
	Furthermore, for every $t\ge 0$, $P_t$ is a bounded linear operator on $\mathcal{B}^{\psi_{0,\beta}}_{2(s+1)}(H_0)$ by Lemma~\ref{lem:smoothnesspreserving}, and on this space, we have that all generators of the split semigroups and the original semigroup are well-defined together with their products, and satisfy
	\begin{equation}
		\left( \sum_{m=0}^{M}\mathcal{G}_{0,m} + \sum_{j=1}^{d}\mathcal{G}_j \right)^\alpha
		=
		\mathcal{G}^\alpha,
		\quad \alpha=0,\dots,s+1.
	\end{equation}
	Hence, we obtain the claimed result from \cite[Theorem~2.3, Sections~4.1, 4.4]{HansenOstermann2009a}.
\end{proof}
\section{Symmetrically weighted sequential splitting}\label{sec:swss}
	Applying the theory of \cite{GyongyKrylov2006,GyongyKrylov2011} allows us to obtain asymptotic expansions for the forward and backward ordering of the Lie-Trotter splitting and the Ninomiya-Victoir splitting if the function $f$ is sufficiently smooth.
	Using symmetry, we can even prove that the Ninomiya-Victoir splitting and the \emph{symmetrically weighted sequential splitting}, going back at least to \cite[equation (25)]{Strang1963} and given by
	\begin{equation}
		Q^{\mathrm{SWSS}}_{t,n}f
		:=
		\frac{1}{2}\left( (Q^{\mathrm{LTfwd}}_{(t/n)})^nf + (Q^{\mathrm{LTbwd}}_{(t/n)})^nf \right),
	\end{equation}
	have asymptotic expansions not only in $n^{-1}$, but even $n^{-2}$.
	Hence, every extrapolation step would improve convergence by two orders.
	In particular, the symmetrically weighted sequential splitting is of second order. Comparing the dimension of integration space of different second         order schemes and in view of possible extrapolations  we use SWSS in our numerical computations detailed below. Indeed dimension of integration space for the Ninomiya-Victoir scheme is $ n (d+1) $, whereas sequential splitting leads to dimension $ n d + 1 $.

\section{Application: the Heath-Jarrow-Morton equation}
\label{sec:HJM}
As application of our theoretical results, we provide a numerical method for the efficient simulation of the Heath-Jarrow-Morton equation of interest rate theory.
It is of the form specified in \eqref{eq:spde}, where the infinitesimal generator is given by the differential operator $\frac{\dd}{\dd x}$.
In order to include a stochastic volatility process, the Hilbert spaces $H_i$, $i=0,\dots,\ell$ are specified as follows.
We set
\begin{alignat}{2}{}
	H_i:=\bigl\{ h\in\mathrm{L}^1_{\mathrm{loc}}( (0,\infty) )\colon
	&\text{$h$ is $i+1$ times weakly differentiable and} \notag \\
	&\text{$h'$,\dots,$h^{(i+1)}\in\mathrm{L}^2_{\alpha_i}( (0,\infty) )$} \bigr\} \times \mathbb{R}.
\end{alignat}
Here, $0<\alpha_0<\dots<\alpha_\ell$, and 
\begin{equation}
\mathrm{L}^2_{\alpha}( (0,\infty) ):=\left\{ h\in\mathrm{L}^1_{\mathrm{loc}}( (0,\infty) )\colon \int_{(0,\infty)}r(x)^2\exp(\alpha x)\dd x<\infty \right\}.
\end{equation}
It is easy to see that $H_{i+1}\subset H_i$ for $i\in\left\{ 0,\dots,\ell-1 \right\}$, and that every function in $H_0$ is continuous and bounded (see also \cite{Filipovic2001}).
The scalar product on $H_i$ reads
\begin{alignat}{2}{}
	\langle (h_1,v_1),(h_2,v_2)\rangle_{H_i}
	&:=
	h_1(0)h_2(0) \notag \\
	&\phantom{:=}+ \sum_{m=1}^{i}\int_{(0,\infty)}h_1^{(m)}(x)h_{2}^{(m)}(x)\exp(\alpha_i x)\dd x + v_1 v_2.
\end{alignat}
With the induced norm, 
\begin{equation}
	A\colon H_{i+1}\to H_i, \quad (h,v)\mapsto(h',-\alpha v),
\end{equation}
where $\alpha\ge 0$ is a constant, becomes a bounded linear operator.
It agrees with the generator of the shift semigroup on the first component of $H_i$, $i\in\left\{ 0,\dots,\ell-1 \right\}$.

Consider the Heath-Jarrow-Morton equation with stochastic volatility in It\^o form,
\begin{subequations}
	\begin{alignat}{2}{}
		\dd r(t,r_0,v_0)
		&=
		(Ar(t,r_0,v_0)+\alpha_{\mathrm{HJM}}(r(t,r_0,v_0),v(t,v_0)))\dd t \notag \\
		&\phantom{=}+ \sum_{j=1}^{d}\sigma_j(r(t,r_0,v_0),v(t,v_0))\dd W^j_t, \\
		\dd v(t,v_0)
		&=
		-\alpha v(t,v_0)\dd t + \sum_{j=1}^{d}\gamma_j\dd W^j_t, \\
		r(0,r_0,v_0) &= r_0, \\
		v(0,v_0) &= v_0.
	\end{alignat}
\end{subequations}
The stochastic volatility $v(t,v_0)$ is chosen as a mean-reverting Ornstein-Uhlenbeck process.
The HJM drift satisfies the condition
\begin{equation}
	\alpha_{\mathrm{HJM}}(h,v)(x)
	=
	\sum_{j=1}^{d}\sigma_j(h,v)(x)\int_{0}^{x}\sigma_j(h,v)(\xi)\dd\xi.
\end{equation}
We assume that $\sigma_j$ are of the form required in Section~\ref{subsec:weightedspaces-application}, i.e., $\sigma_j(h,v)=g_j(Lh,v)$, where $g_j\in\mathrm{C}_b^{\infty}(\mathbb{R}^{N+1};H_{\ell})$ and $L\colon H_0\to\mathbb{R}^N$ is bounded linear.
Rewriting the equation in Stratonovich form, we see that
\begin{equation}
	V_0(h,v)
	=
	\alpha_{\mathrm{HJM}}(h,v) - \frac{1}{2}\sum_{j=1}^{d}D\sigma_j(h,v)(\sigma_j(h,v)),
\end{equation}
and it follows easily that $V_0(h,v)=g_0(Lh,v)$ with some $g_0\in\mathrm{C}_b^{\infty}(\mathbb{R}^{N+1};H_{\ell})$.
Hence, Theorem~\ref{thm:convrate} applies to prove the optimal rate of convergence of $s$ of a splitting scheme for sufficiently smooth functions $f\colon H_0\to\mathbb{R}$, given that the initial value satisfies $(r_0,v_0)\in H_{s+1}$.

\subsection{The money market account}
\label{subsec:HJM-mma}
In order to calculate standard payoffs, we not only need the instantaneous forward curve, but also the money market account $(B_t)_{t\ge 0}$.
It is given by $B_t=\exp(z(t,0))$, where
\begin{equation}
	\dd z(t,r_0,v_0,z_0) = R_t\dd t, \quad z(0,r_0,v_0,z_0) = z_0,
\end{equation}
and can therefore be easily included into our splitting scheme.
Here, we denote by $R_t := r(t,r_0,v_0)(0)$ the short rate induced by our HJM model.

To recover the optimal rate of convergence, we argue as follows.
On the product space $\tilde{H}_i:=H_{i}\times\mathbb{R}$, we consider the weight function $\tilde{\psi}_{i,\beta}(h,v,z):=\psi_{i,\beta}(h,v) + z^2$ (see Remark~\ref{rem:generalweightfunc}).
As proved before,
\begin{equation}
	\mathbb{E}[\psi_{i,\beta}(r(t,r_0,v_0),v(t,v_0))]
	\le
	\exp(Ct)\psi_{i,\beta}(r_0,v_0).
\end{equation}
Furthermore, as $R_t\le\psi_{i,\beta}(r(t,r_0,v_0))$,
\begin{alignat}{2}{}
	\mathbb{E}[z(t,r_0,v_0,z_0)^2]
	&\le z_0^2 + \int_{0}^{t}\mathbb{E}[z(t,r_0,v_0,z_0)^2]\dd s + \int_{0}^{t}\mathbb{E}[R_t^2]\dd s \notag \\
	&\le z_0^2 + \int_{0}^{t}\mathbb{E}[\tilde{\psi}_{i,\beta}(r(t,r_0,v_0),v(t,v_0),z(t,r_0,v_0,z_0))]\dd s.
\end{alignat}
Altogether, an application of Gronwall's inequality proves
\begin{equation}
	\mathbb{E}[\tilde{\psi}_{i,\beta}(r(t,r_0,v_0),v(t,v_0),z(t,r_0,v_0,z_0))]
	\le
	\exp(Ct)\tilde{\psi}_{i,\beta}(r_0,v_0,z_0),
\end{equation}
and we can apply the above theorems to all functions contained in $\mathcal{B}^{\tilde{\psi}_{i,\beta}}_{k}(H_i\times\mathbb{R})$ by evident modifications of the above proofs.

Now, the money market account is not included in the above setting.
More precisely, $B_t=\exp(z(t,r_0,v_0,0))$, and this growth is larger that the quadratic growth admitted by $\tilde{\psi}_{i,\beta}$.
We deal with this problem in the following way:
Actually, $z(t,r_0,v_0,0)$ should be \emph{nonnegative} from an economic point of view.
Hence, we replace the money market account by $\tilde{B}_t:=\exp(\Phi(z(t,r_0,v_0,0)))$, where $\Phi\colon\mathbb{R}\to\mathbb{R}$ is $\mathrm{C}^{\infty}$ with bounded derivatives, satisfies $\Phi(z)=z$ for all $z\ge -K$, and is bounded from below by $-2K$ with some $K>0$.
In our numerical experiments, performed using the model calibrated to the data from \cite{Kluge2005}, we never encountered paths with $z(t,0)\le 0$.
Furthermore, even if $z(t,0)$ becomes slightly negative on some paths, this is numerically innocent, as we can adjust $K$ accordingly.
We want to stress that our modification only acts on economically dubious paths where the money market account falls significantly in the long run, and neither limits temporary decrease, or any increase whatsoever.

Clearly, $\tilde{B}_t^{-1}\le\exp(2K)$.
Hence, the modified payoff of a zero coupon bond with time to maturity $\delta$, 
\begin{equation}
	f(h,v,z)
	:=
	\exp(-\Phi(z))\exp(-\int_{0}^{\delta}h(s)\dd s),
\end{equation}
is included in our setup, and lies in $\mathcal{B}^{\tilde{\psi}_{0,\beta}}_k(H_0\times\mathbb{R})$ for all $k\ge 0$ if $\beta>0$ is chosen large enough:
first, note that $f$ depends on $h$ only via the bounded linear functional $\ell\colon H_0\to\mathbb{R}$, $\ell(h):=\int_{0}^{\delta}h(s)\dd s$.
It follows that
\begin{equation}
	\lvert f(h,v,z)\rvert
	\le
	\exp(2K)\exp(C\lVert h\rVert_{H_0}).
\end{equation}
Choosing $\beta>C$, the claim is proved, as it is clear that we can approximate $f$ by functions of the form $(h,z)\mapsto\exp(-\Phi(z))\varphi(\ell(h))$ with $\varphi\in\mathrm{C}_b^{\infty}(\mathbb{R})$ in the norm of $\mathcal{B}^{\tilde{\psi}_{0,\beta}}_k(H_0\times\mathbb{R})$.

While standard payoffs, such as caplets and swaptions, do not satisfy the smoothness assumptions required in our results, we can at least prove that they are contained in a space on which convergence~-- albeit without rates~-- is ensured.
A similar argument as for the bond price can be used to prove that the modified payoffs of caplets, 
\begin{equation}
	f(h,v,z)=\exp(-\Phi(z))(L_{\delta}(h)-K)_+, 
\end{equation}
where $L_{\delta}(h):=\frac{1}{\delta}\left( \exp\left( \int_{0}^{\delta}h(\tau)\dd\tau \right) - 1 \right)$ is the LIBOR rate, and payer swaptions, 
\begin{align}
	& f(h,v,z) = \nonumber \\
& =\exp(-\Phi(z))\left[ \sum_{i=1}^{I}\exp(-\int_{0}^{i\delta}h(\tau)\dd\tau)\left( \exp\left( \int_{(i-1)\delta}^{i\delta}h(\tau)\dd\tau \right)-(1+\delta K) \right) \right]_+, \nonumber
\end{align}
are contained in $\mathcal{B}^{\tilde{\psi}_{0,\beta}}(H_0\times\mathbb{R})$.
Here, however, taking the positive part makes these functions nonsmooth.
As the space $\mathcal{B}^{\tilde{\psi}_{0,\beta}}_{2(s+1)}(H_0\times\mathbb{R})$ of functions on which a rate of convergence is proved is dense in $\mathcal{B}^{\tilde{\psi}_{0,\beta}}(H_0\times\mathbb{R})$, we still obtain convergence.

\section{Numerics for the Heath-Jarrow-Morton equation}
We present the results of numerical computations for a Heath-Jarrow-Morton model. We do neither claim that the chosen HJM model is particularly well suited nor that the chosen calibration strategy is the best. We only want to demonstrate that a non-linear infinite-dimensional HJM model with stochastic volatility can be efficiently calibrated to market data with a satisfactory result.

First, a numerical calibration to caplet prices is performed, afterwards, a payer swaption is priced using the calibrated model.
In our numerics, space discretisation is performed using piecewise affine and continuous functions, where the mesh is aligned with the time mesh.
Hence, the partial differential equation
\begin{equation}
	\frac{\partial}{\partial t}r_{0,0}(t,r_0)(x) = \frac{\partial}{\partial x}r_{0,0}(t,r_0)(x),
	\quad
	r_{0,0}(0,r_0)(x) = r_0(x),
\end{equation}
is solved exactly by shifting $r_0$.

\subsection{Calibration}
We demonstrate the efficiency of the presented method by performing the calibration of a parametrised, time-homogeneous Heath-Jarrow-Morton model to the caplet volatility surface provided in \cite{Kluge2005}.
Note that the bond prices given there are automatically reproduced in our model by choosing them as the initial value.

We set $d=3$, and specify $\sigma_j(h,v)=g_j(h,v)\lambda_j$.
Here, $\lambda_j$ is assumed to be of the exponential-polynomial type \cite{Filipovic2001}, $\lambda_j(x)=\sum_{i=0}^{i_0}\alpha_{j,i}x^i\exp(-\beta x)$.
It is easy to see that under such assumptions, the regularity required in \cite[Section~5.2]{Filipovic2001} is satisfied.
In our experiments, we choose $i_0=2$.

There are several economically sound possibilities for choosing $g_j$.
Guided by the Cox-Ingersoll-Ross model, one could choose $g_j(h,v)=\sqrt{\lvert v h(t_j)\rvert}$ with some $t_j\ge 0$, where the absolute values are necessary as we cannot guarantee positive interest rates by this approach.
This ansatz, however, is not contained in our general setup, as $g_j$ is not a smooth function of $h$.

\begin{figure}[htpb]
	\centering
	\includegraphics[width=13cm]{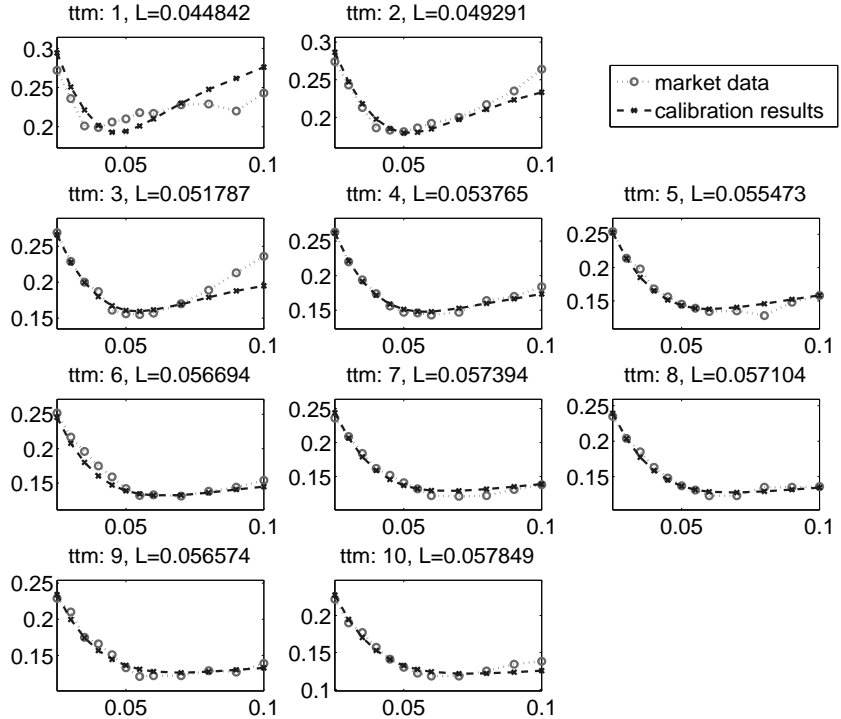}
	\caption{Calibration of the $\tanh$-type volatilities}
	\label{fig:calibration-tanh-sv}
\end{figure}
Instead, we assume $g_j(h,v)=\tanh(c_j\exp(v)\int_0^{t_j}h(s)\dd s)$.
This ensures that the volatilities are bounded and vanish if the benchmark yields $\int_{0}^{t_j}h(s)\dd s$ driving the equation go to zero.
We discretize the HJM-equation by the symmetrically weigthed sequential splitting scheme, as described in Section \ref{sec:swss}.
The calibration is performed by combining a custom-written genetic algorithm, searching for global minima, with the Levenberg-Marquardt implementation from \cite{Lourakis2004} to optimise locally.
The model caplet values are calculated numerically, using $12$ time steps per year and $2048$ quasi-Monte Carlo paths, based on the direction vectors for Sobol$'$ sequences of Joe and Kuo \cite{JoeKuo2008}.

All in all, $13$ parameters are used to match $120$ prices, and total calibration time is $14.5$ minutes running on 16 cores of a Primergy RX200 S6 spotting 4 Intel Xeon CPU X5650 processor, each of which provides 6 cores.  The calculation of $120$ option prices takes about $.5$ seconds and therefore merits to be called efficient.

We are able to match the market volatilites taken from \cite{Kluge2005} very well using the $\tanh$-type volatilies. 
Only the error in the earlier time slices is significant, see Figure~\ref{fig:calibration-tanh-sv}.
This is typical for models without jumps.
These are well known to misprice options close to maturity.
This behaviour can also be connected to the short end of interest rates depending more on announcements by central banks than random fluctuations.

With respect to the martingale property of traded assets, numerical calculations show that bond prices and LIBOR rates satisfy the expected value property to a very high precision already using $2048$ quasi-Monte Carlo paths.

\subsection{Pricing}
As an application, we price an at the money payer swaption with a time to maturity of $T=5$ years, where the underlying swap pays out quarter annually over three years, i.e., at the times $T_i=T+i\delta$ for $i=1,\dots,12$ and $\delta=.25$.
A reference computation with $16384$ paths and $120$ time steps per year yields the value $0.0281579$.
Using $2048$ paths and $12$ time steps per year, as in the calibration, we obtain $0.028074$.
The relative error is thus approximately $.003$.
As the calculation of the coarser approximation takes $.25$ seconds, we have established the efficiency of the suggested method.

\section{Conclusions}
We introduce an analytic setup for the analysis of weak approximation methods for stochastic partial differential equations.
The Heath-Jarrow-Morton equation of interest theory is shown to be included in the class where this approach is applicable.
Moreover, the set of admissible test functions contains important payoffs such as caplets and swaptions. We argue that higher-order weak approximation
schemes can be used together with QMC algorithms to obtain an efficient pricing method, which is even superior to multi-level MC. 
The efficiency of our numerical method is proved by the calibration of the model to given caplet data.

\appendix
\section{Functional analytic results}
\begin{proposition}
	\label{prop:simultaneousOB}
	Let $(X,\langle\cdot,\cdot\rangle_{X})$, $(Y,\langle\cdot,\cdot\rangle_Y)$ be separable Hilbert spaces with norms $\lVert\cdot\rVert_X$ and $\lVert\cdot\rVert_Y$ such that $Y$ is compactly and densely embedded into $X$.
	Then, there exists an orthonormal basis $(e_n)_{n\in\mathbb{N}}\subset Y$ of $X$ that is simultaneously orthogonal in $Y$.
	Furthermore, $\lim_{n\to\infty}\lVert e_n\rVert_{Y}^{-1}=0$.
\end{proposition}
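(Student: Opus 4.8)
The plan is to construct the simultaneous orthonormal basis via the spectral theorem applied to a suitable compact self-adjoint operator, and then to read off the decay of $\lVert e_n\rVert_Y^{-1}$ from the eigenvalues. First I would set up the comparison of the two inner products: since $Y$ embeds continuously into $X$, there is a constant $c>0$ with $\lVert y\rVert_X\le c\lVert y\rVert_Y$ for all $y\in Y$, so the bilinear form $(y_1,y_2)\mapsto\langle y_1,y_2\rangle_X$ is bounded on $Y$. By the Riesz representation theorem in $(Y,\langle\cdot,\cdot\rangle_Y)$, there is a bounded linear operator $T\colon Y\to Y$ with $\langle Ty_1,y_2\rangle_Y=\langle y_1,y_2\rangle_X$ for all $y_1,y_2\in Y$. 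This $T$ is self-adjoint on $Y$ (the right-hand side is symmetric) and positive (taking $y_1=y_2$ gives $\langle Ty,y\rangle_Y=\lVert y\rVert_X^2>0$ for $y\neq0$, so in particular $T$ is injective).

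The key step is that $T$ is \emph{compact} on $Y$. This is exactly where the compactness of the embedding $Y\hookrightarrow X$ enters: if $(y_k)$ is bounded in $Y$, then after passing to a subsequence it converges in $X$, say to some limit, and hence is Cauchy in $X$; but $\lVert T y_k - T y_l\rVert_Y^2 = \langle T(y_k-y_l), T(y_k-y_l)\rangle_Y \le \lVert T\rVert \, \langle T(y_k-y_l), y_k-y_l\rangle_Y = \lVert T\rVert\,\lVert y_k-y_l\rVert_X^2$ using self-adjointness and the defining identity, so $(Ty_k)$ is Cauchy in $Y$. Thus $T$ is a compact, self-adjoint, positive, injective operator on the separable Hilbert space $Y$. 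The spectral theorem for compact self-adjoint operators then yields an orthonormal basis $(\tilde e_n)_{n\in\mathbb{N}}$ of $Y$ consisting of eigenvectors, $T\tilde e_n=\lambda_n\tilde e_n$ with $\lambda_n>0$ and $\lambda_n\to0$ (injectivity rules out a kernel, and denseness of $Y$ in $X$ ensures the eigenvectors span all of $Y$, which is dense in $X$).

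Now set $e_n:=\lambda_n^{-1/2}\tilde e_n$. For the $Y$-orthogonality: $\langle e_m,e_n\rangle_Y=\lambda_m^{-1/2}\lambda_n^{-1/2}\langle\tilde e_m,\tilde e_n\rangle_Y=0$ for $m\neq n$, so $(e_n)$ is orthogonal in $Y$ (with $\lVert e_n\rVert_Y^2=\lambda_n^{-1}$). For the $X$-orthonormality: $\langle e_m,e_n\rangle_X=\langle T e_m,e_n\rangle_Y=\lambda_m^{-1/2}\lambda_n^{-1/2}\langle T\tilde e_m,\tilde e_n\rangle_Y=\lambda_m^{-1/2}\lambda_n^{-1/2}\lambda_m\langle\tilde e_m,\tilde e_n\rangle_Y=\delta_{mn}$, so $(e_n)$ is orthonormal in $X$. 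To see it is a basis of $X$ and not merely an orthonormal system, note $\lspan\{e_n\}=\lspan\{\tilde e_n\}=Y$, which is dense in $X$. Finally $\lVert e_n\rVert_Y^{-1}=\lambda_n^{1/2}\to0$ since $\lambda_n\to0$, which is the last assertion.

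I expect the main obstacle to be the compactness of $T$: one must combine the compact embedding with self-adjointness in exactly the right order, as done above via the identity $\lVert Ty\rVert_Y^2\le\lVert T\rVert\,\lVert y\rVert_X^2$, rather than naively trying to factor $T$ through the embedding. Everything else — the Riesz representation, the spectral decomposition, and the bookkeeping of the two normalizations — is routine.
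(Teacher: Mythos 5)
Your proof is correct. It rests on the same core idea as the paper's: use the Riesz representation theorem to encode the $X$-inner product as a compact self-adjoint operator and diagonalize it. The only real difference is where you realize that operator. The paper takes $\kappa\colon X\to Y$ with $\langle\kappa x,y\rangle_Y=\langle x,y\rangle_X$ and studies $K:=\iota\kappa$ on $X$, so compactness is immediate (composition with the compact embedding $\iota$) and the eigenvectors are already $X$-orthonormal; the $Y$-orthogonality and the norms $\lVert e_n\rVert_Y=\lambda_n^{-1/2}$ then drop out of the eigenvalue relation. You instead work with $T$ on $Y$, which forces you to prove compactness by hand via the inequality $\lVert Ty\rVert_Y^2\le\lVert T\rVert\,\lVert y\rVert_X^2$ (your argument for this, using $T^2\le\lVert T\rVert T$ for positive self-adjoint $T$, is the right one and is exactly the step that cannot be skipped) and to renormalize the eigenvectors afterwards. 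What your variant buys is a cleaner positivity/injectivity discussion, since $\langle Ty,y\rangle_Y=\lVert y\rVert_X^2$ is manifest on $Y$; what it costs is the extra compactness lemma. Both routes are standard and yield the same basis up to scaling.
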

\begin{proof}
	By the Riesz representation theorem, there exists a bounded operator $\kappa\colon X\to Y$ such that
	\begin{equation}
		\langle \kappa x,y\rangle_{Y} = \langle x,y\rangle_X
		\quad\text{for all $x\in X$ and $y\in Y$}.
	\end{equation}
	With $\iota\colon Y\to X$ the compact embedding, we set $K:=\iota\kappa$.
	$K$ is clearly compact and also symmetric, as
	\begin{equation}
		\langle Kx_1,x_2\rangle_X
		=
		\langle \kappa x_1,\kappa x_2\rangle_Y
		=
		\langle x_1,Kx_2\rangle_X.
	\end{equation}
	Thus, there exists an orthonormal basis $(e_n)_{n\in\mathbb{N}}$ of $X$ and a sequence $(\lambda_n)_{n\in\mathbb{N}}\subset\mathbb{R}$ decreasing monotonically to zero such that $Ke_n=\lambda_n e_n$, and we see that $(e_n)_{n\in\mathbb{N}}\subset Y$.
	We obtain
	\begin{equation}
		\langle e_n,e_m\rangle_{Y}
		=
		\lambda_n^{-1}\langle Ke_n,e_m\rangle_Y
		=
		\lambda_n^{-1}\langle e_n,e_m\rangle_X
		=
		\lambda_n^{-1}\delta_{n,m}
		\quad\text{for $n$, $m\in\mathbb{N}$},
	\end{equation}
	whence $(e_n)_{n\in\mathbb{N}}$ is orthogonal in $Y$ and $\lVert e_n\rVert_Y = \lambda_n^{-1/2}$, and the claim is proved.
\end{proof}
\begin{corollary}
	\label{cor:simultaneousOBapproximation}
	Under the assumptions of Proposition~\ref{prop:simultaneousOB}, let $\pi_N$ denote the $X$-orthogonal projection onto $X_N:=\lspan\left\{ e_n\colon n\in\mathbb{N} \right\}$.
	Then, 
	\begin{equation}
		\lim_{n\to\infty}\sup_{\substack{y\in Y\\\lVert y\rVert_Y\le 1}}\lVert y-\pi_N y\rVert_X = 0.
	\end{equation}
\end{corollary}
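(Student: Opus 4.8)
The plan is to exploit the explicit diagonal structure established in Proposition~\ref{prop:simultaneousOB}. Recall that $(e_n)_{n\in\mathbb{N}}$ is orthonormal in $X$ and orthogonal in $Y$ with $\lVert e_n\rVert_Y=\lambda_n^{-1/2}$, where $\lambda_n\downarrow 0$. Accordingly, the family $(\lambda_n^{1/2}e_n)_{n\in\mathbb{N}}$ is an orthonormal basis of $Y$. Given $y\in Y$, I would expand it in this $Y$-orthonormal basis, $y=\sum_{n=1}^{\infty}c_n\lambda_n^{1/2}e_n$ with $\sum_{n}c_n^2=\lVert y\rVert_Y^2$, and observe that the same vector, viewed in $X$, has expansion $y=\sum_{n=1}^{\infty}c_n\lambda_n^{1/2}e_n$ with respect to the $X$-orthonormal basis $(e_n)_{n}$ as well. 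The $X$-orthogonal projection $\pi_N$ onto $X_N:=\lspan\{e_1,\dots,e_N\}$ then simply truncates this series (note the statement's index set should read $\{e_n\colon n=1,\dots,N\}$, matching the usage in the proof of Theorem~\ref{thm:CbkBpsikdense}).

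The key estimate is then immediate: for $\lVert y\rVert_Y\le 1$,
\begin{equation}
	\lVert y-\pi_N y\rVert_X^2
	=
	\sum_{n=N+1}^{\infty}c_n^2\lambda_n
	\le
	\lambda_{N+1}\sum_{n=N+1}^{\infty}c_n^2
	\le
	\lambda_{N+1}\lVert y\rVert_Y^2
	\le
	\lambda_{N+1},
\end{equation}
where I used that $(\lambda_n)$ is decreasing, so $\lambda_n\le\lambda_{N+1}$ for all $n\ge N+1$. Taking the supremum over the unit ball of $Y$ gives $\sup_{\lVert y\rVert_Y\le 1}\lVert y-\pi_N y\rVert_X^2\le\lambda_{N+1}$, and since $\lambda_{N+1}\to 0$ as $N\to\infty$, the claim follows.

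There is essentially no obstacle here; the work was already done in Proposition~\ref{prop:simultaneousOB}, and this corollary is just the quantitative reading of the eigenvalue decay. The only point requiring a modicum of care is bookkeeping the two roles of the eigenbasis — orthonormal in $X$ versus (after rescaling by $\lambda_n^{1/2}$) orthonormal in $Y$ — so that the coefficients $c_n$ in the two expansions are correctly identified; once that is set up, the uniformity over the $Y$-unit ball is free because the bound $\lambda_{N+1}$ does not depend on $y$.
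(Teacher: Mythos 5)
Your proof is correct and follows essentially the same route as the paper: both reduce to the identity $\langle y,e_n\rangle_X=\lambda_n\langle y,e_n\rangle_Y$ (equivalently, $Y$-coefficients versus $X$-coefficients of the rescaled eigenbasis) and the bound $\lVert y-\pi_N y\rVert_X^2\le\lambda_{N+1}\lVert y\rVert_Y^2$. The only cosmetic difference is that you invoke completeness of $(\lambda_n^{1/2}e_n)_{n\in\mathbb{N}}$ in $Y$ (true, but it deserves a one-line justification), whereas the paper only needs the Bessel inequality for this orthonormal system.
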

\begin{proof}
	By Parseval's identity,
	\begin{alignat}{2}
		\lVert y-\pi_N y\rVert_X^2
		&=
		\sum_{n=N+1}^{\infty}\langle y,e_n\rangle_X^2
		=
		\sum_{n=N+1}^{\infty}\langle y,Ke_n\rangle_Y^2 \notag \\
		&\le
		\sup_{n>N}\lambda_n\sum_{n=N+1}^{\infty}\langle y,\lambda_n^{1/2}e_n\rangle_Y^2
		\le
		\lambda_{N+1}\lVert y\rVert_Y,
	\end{alignat}
	where we apply that $\lVert e_n\rVert_Y=\lambda_n^{-1/2}$ and that $(e_n)_{n\in\mathbb{N}}$ is orthogonal in $Y$.
	As $(\lambda_n)_{n\in\mathbb{N}}$ decreases to zero, the claim follows.
\end{proof}

\subsection*{Acknowledgements}
	The numerical calculations were performed on the computing facilities of the Departement Mathematik of ETH Z\"urich. 
	Parts of the computer implementation were written by Dejan Velu\v{s}\v{c}ek, whom the authors thank for his support.
	Financial support from the ETH Foundation is gratefully acknowledged.

\bibliographystyle{amsplain}
\bibliography{lit,mylit,mythesis}
\end{document}